\theoremstyle{plain}
\newtheorem{theorem}{Theorem}
\newtheorem{lemma}[theorem]{Lemma}
\newtheorem{corollary}[theorem]{Corollary}
\theoremstyle{definition}
\newtheorem{example}{Example}
\newtheorem{definition}{Definition}
\newcommand{\dmid}{\mathrel{\|}}
\def\BibTeX{{\rm B\kern-.05em{\sc i\kern-.025em b}\kern-.08em
    T\kern-.1667em\lower.7ex\hbox{E}\kern-.125emX}}
\title{Walk Matrix–Based Upper Bounds on Generalized Cospectral Mates}
\author{\small Muhammad Raza$^{{\rm a}}$\thanks{Corresponding author: mraza@itu.edu.pk}\quad\quad Mudassir Shabbir$^{\rm a}$\quad\quad Waseem Abbas$^{{\rm b}}$
\\
{\footnotesize$^{\rm a}$Department of Computer Science, Information Technology University, Lahore, Pakistan}\\
{\footnotesize$^{\rm b}$Department of Systems Engineering, The University of Texas at Dallas, Richardson, TX
}
}
\date{}
\begin{document}

\maketitle

\begin{abstract}
The problem of characterizing graphs determined by their spectrum (DS) or generalized spectrum (DGS) has been a longstanding topic of interest in spectral graph theory, originating from questions in chemistry and mathematical physics. While previous studies primarily focus on identifying whether a graph is DGS, we address a related yet distinct question: how many non-isomorphic generalized cospectral mates a graph can have? Building upon recent advances that connect this question to the properties of the walk matrix, we introduce a broad family of graphs and establish an explicit upper bound on the number of non-isomorphic generalized cospectral mates they can have. This bound is determined by the arithmetic structure of the determinant of the walk matrix, offering a refined criterion for quantifying the multiplicity of generalized cospectral graphs. This result sheds new light on the structure of generalized cospectral graphs and provides a refined arithmetic criterion for bounding their multiplicity.

\noindent\textbf{Keywords}: Adjacency matrix, Walk matrix, Cospectral graphs, Generalized spectrum

\noindent
\textbf{MSC Classification}: 05C50
\end{abstract}

\section{Introduction}
\label{section:intro}
The spectrum of a graph encodes significant combinatorial information and has long served as a powerful tool in addressing various problems in graph theory, even when those problems are not explicitly spectral in nature. A central question in spectral graph theory is: “Which graphs are determined by their spectrum (DS)?” This problem, which originated over 50 years ago in the context of chemistry, has attracted considerable attention in recent years.

In 1956, Günthard and Primas \cite{gunthard1956zusammenhang} posed this question in a paper connecting graph spectra to Hückel’s theory in chemistry. Numerous constructions of cospectral graphs—graphs sharing the same spectrum—have since been studied. For instance, Godsil and McKay \cite{godsil1982constructing} introduced the GM-switching method, which generates many pairs of cospectral graphs (including those with cospectral complements). Another notable result by Schwenk \cite{schwenk1973almost} showed that almost all trees are not DS.

This problem is also closely related to Kac’s famous question \cite{kac1966can}: “Can one hear the shape of a drum?” Fisher \cite{fisher1966hearing} modeled the shape of a drum using a graph, with the drum’s sound characterized by the eigenvalues of the graph. Thus, Kac’s question is essentially equivalent to the DS problem for graphs.

Formally, let \( G = (V, E) \) be a simple graph with $n$ vertices, meaning it is finite, undirected, and contains no loops or multiple edges. The \emph{adjacency matrix} \( A(G) \) is the $n \times n$ symmetric matrix defined by $A_{ij} = 1$ if vertices $v_i$ and $v_j$ are adjacent, and $A_{ij} = 0$ otherwise. The \emph{complement} of a graph $G$, denoted $\overline{G}$, is the graph on the same vertex set where two vertices are adjacent if and only if they are not adjacent in $G$. The adjacency matrix of the complement is given by $A(\overline{G}) = J - I - A(G)$, where $J$ is the $n \times n$ all-ones matrix and $I$ is the identity matrix.

Two graphs $G$ and $H$ are \emph{isomorphic}, denoted $G \cong H$, if there exists a bijection $f: V(G) \to V(H)$ such that $u$ and $v$ are adjacent in $G$ if and only if $f(u)$ and $f(v)$ are adjacent in $H$. In other words, isomorphic graphs have identical structure, differing only by a relabeling of vertices. The \emph{spectrum} of an undirected graph $G$, denoted $\operatorname{Spec}(G)$, is the multiset of eigenvalues of its adjacency matrix $A(G)$, counted with algebraic multiplicity. A graph $G$ is said to be \emph{determined by its spectrum} (DS) if every graph with the same spectrum as $G$ is isomorphic to $G$.

A widely studied extension of the above problem involves the notion of the \emph{generalized spectrum}, defined as the pair $(\operatorname{Spec}(G), \operatorname{Spec}(\overline{G}))$. Two or more non-isomorphic graphs are called \emph{generalized cospectral mates} if they share the same generalized spectrum. A graph \( G \) is said to be \emph{determined by its generalized spectrum} (DGS) if every graph with the same generalized spectrum as \( G \) is isomorphic to \( G \).

Wang et al.~\cite{wang2006sufficient} provided a sufficient condition for a family of graphs to be DGS, and subsequently proposed an exclusion algorithm for testing the DGS property~\cite{wang2006excluding}. Later, Wang~\cite{wang2013generalized} introduced a new family of graphs and showed that almost all graphs in this family are DGS, and later proved that all graphs in this family are DGS~\cite{wang2017simple}. Qiu et al.~\cite{qiu2021oriented} extended these ideas to oriented graphs, establishing an arithmetic criterion for a graph to be determined by its generalized skew spectrum. In further work, Qiu et al.~\cite{qiu2023generalized} provided an arithmetic criterion for regular graphs determined by their generalized $\Phi$-spectrum.

Recent advances have leveraged the properties of the walk matrix to address these questions. For a graph $G$ with $n$ vertices, the \emph{walk matrix} $W(G)$ is the $n \times n$ matrix whose columns are $e,\, A(G)e,\, \dots,\, A(G)^{n-1}e$, where $e$ is the all-ones column vector of length $n$. A graph $G$ is called \emph{controllable} if $W(G)$ is non-singular.

While much of the existing literature has focused on identifying whether a graph is DGS, our work takes a different direction: we establish an explicit upper bound on the number of non-isomorphic generalized cospectral mates a graph can have. This bound is determined by the number of distinct odd prime factors of the determinant of its walk matrix.

We denote by \( \mathbb{F}_p \) the finite field with \( p \) elements, and by \( \operatorname{rank}_p(M) \) the rank of an integral matrix \( M \) over \( \mathbb{F}_p \). Wang et al.~\cite{wang2023graphs} introduced a broad family of graphs, denoted \( \mathcal{H}_n \), of order \( n \), characterized by the property that for every \( G \in \mathcal{H}_n \), \( 2^{-\lfloor n/2 \rfloor} \det W(G) = p^2 b \), where \( p \) is an odd prime, \( b \) is an odd square-free integer, and \( \operatorname{rank}_p W(G) = n-1 \). For this family, they established the following theorem:

\begin{theorem}
\label{theorem:wang}
\cite{wang2023graphs}
Let \( G \in \mathcal{H}_n \). Then \( G \) has at most one non-isomorphic generalized cospectral mate; that is, \( G \) is either DGS or has a unique generalized cospectral mate.
\end{theorem}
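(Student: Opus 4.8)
The plan is to recast the counting of generalized cospectral mates in terms of rational orthogonal matrices. By the Wang–Xu correspondence, if \( G \) and \( H \) are generalized cospectral then there is a unique rational orthogonal matrix \( Q \) with \( Q^{T}A(G)Q = A(H) \) and \( Qe = e \); equivalently \( Q^{T}W(G) = W(H) \). Since \( \det W(G) = 2^{\lfloor n/2\rfloor}p^{2}b \neq 0 \), the matrix \( W(G) \) is nonsingular over \( \mathbb{Q} \), so \( H \) is recovered from \( Q \), and two admissible matrices give isomorphic mates exactly when they are related by right multiplication by a permutation matrix. First I would reduce the theorem to the statement that, apart from the permutation matrices (which merely return \( G \) itself), the admissible \( Q \)'s produce at most one isomorphism class of graph \( H \).

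The next step is to control the level \( \ell = \ell(Q) \), the least positive integer with \( \ell Q \) integral. Writing \( Q^{T} = \tfrac{1}{\det W(G)}\,W(H)\,\mathrm{adj}\,W(G) \) shows \( \ell \mid \det W(G) \), so the prime factors of \( \ell \) lie in \( \{2\}\cup\{p\}\cup\{q : q\mid b\} \). I would first show \( \ell \) is odd: the hypothesis that \( 2^{-\lfloor n/2\rfloor}\det W(G) \) is an odd integer pins down the \( 2 \)-adic Smith structure of \( W(G) \) and forces \( 2\nmid\ell \), as in the standard \( 2 \)-adic analysis for \( \mathcal{H}_n \). For an odd prime \( q\mid\ell \), set \( M = \ell Q \); by minimality of \( \ell \) the reduction \( \bar M := M \bmod q \) is nonzero, and reducing \( M^{T}M = \ell^{2}I \), \( Me = \ell e \), and \( M^{T}W(G) = \ell W(H) \) modulo \( q \) gives \( \bar M^{T}\bar M\equiv 0 \), \( \bar M e\equiv 0 \), and \( \bar M^{T}W(G)\equiv 0\pmod q \). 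Since \( q\,\|\,\det W(G) \) forces \( \operatorname{rank}_q W(G) = n-1 \) (and this holds for \( q=p \) by hypothesis), the last relation forces \( \operatorname{rank}(\bar M)=1 \); writing \( \bar M = u v^{T} \), the left-kernel vector \( u \) generates the one-dimensional space \( \ker_q W(G)^{T} \) and must be isotropic, \( u^{T}u\equiv 0 \). For the square-free primes \( q\mid b \) this would force \( q^{2}\mid\det W(G) \) — the mechanism underlying the Wang–Xu square-free DGS criterion — contradicting \( q\,\|\,\det W(G) \); so no such \( q \) divides \( \ell \). Finally, the exact power \( p^{2}\,\|\,\det W(G) \) together with \( \operatorname{rank}_p W(G)=n-1 \) limits the multiplicity of \( p \) in \( \ell \) to one. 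Hence \( \ell\in\{1,p\} \).

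With the level restricted, the cases separate. If \( \ell=1 \) then \( Q \) is an integral orthogonal matrix fixing \( e \), hence a permutation matrix, so \( H\cong G \) and no new mate appears. If \( \ell=p \), the rank-one reduction yields \( \bar M = u v^{T} \) where \( u \) spans \( \ker_p W(G)^{T} \) and, via the symmetric relation \( M\,W(H) = \ell W(G)\equiv 0 \), the vector \( v \) spans \( \ker_p W(H)^{T} \), both isotropic with \( v\perp e \). Since these one-dimensional kernels determine \( u \) and \( v \) up to scalars, \( \bar M \) is determined up to a single scalar over \( \mathbb{F}_p \). I would then lift this datum, using \( Q = (W(G)^{-1})^{T}W(H)^{T} \) and controllability of \( G \), to conclude that \( Q \), and therefore \( H \), is determined up to the permutation freedom preserving isomorphism type. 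This gives at most one isomorphism class of mate with \( \ell=p \), completing the bound of at most one non-isomorphic generalized cospectral mate.

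The main obstacle is the \( \ell=p \) uniqueness argument: passing from ``the reduction \( \bar M \) is essentially unique over \( \mathbb{F}_p \)'' to ``the graph \( H \) is unique up to isomorphism.'' The difficulty is that distinct integer lifts of the same \( \bar M \) could a priori yield distinct admissible orthogonal matrices and hence non-isomorphic mates; ruling this out requires the full strength of \( \operatorname{rank}_p W(G)=n-1 \) and the exact divisibility \( p^{2}\,\|\,\det W(G) \) to show that the \( p \)-adic solution locus for \( Q \) is, modulo permutations, a single orbit. I expect the remaining ingredients — the \( 2 \)-adic verification that \( \ell \) is odd and the isotropy obstruction for primes dividing \( b \) — to be routine given the conditions defining \( \mathcal{H}_n \).
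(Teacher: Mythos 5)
Your arithmetic reduction is sound and matches the machinery this paper develops (note the paper does not reprove Theorem~\ref{theorem:wang} directly---it is cited from \cite{wang2023graphs} and recovered as the case $k=1$ of Theorem~\ref{theorem:main}): the level $\ell(Q)$ divides $d_n(W(G))$, it is odd by Lemma~\ref{lemma:2-out}, and the primes $q\mid b$ as well as the square of $p$ are excluded by Lemma~\ref{lemma:pk-out}, so indeed $\ell(Q)\in\{1,p\}$, with $\ell=1$ forcing a permutation matrix. The genuine gap is exactly where you flag it: the case $\ell(Q)=p$. Your plan---show the reduction $\bar M=uv^{T}$ is essentially unique over $\mathbb{F}_p$ and then ``lift'' to conclude $Q$, hence $H$, is determined up to permutation---cannot work as stated. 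Uniqueness of $Q$ up to permutation is not even the right target: distinct mates genuinely have distinct $Q$'s; a rank-one matrix over $\mathbb{F}_p$ has infinitely many integral preimages; and, worse, your vector $v$ spans $\ker_p\bigl(W(H)^{T}\bigr)$, a space that varies with the mate $H$, so ``$\bar M$ determined up to a scalar'' gives you no way to compare two different mates $H$ and $F$.

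The missing idea is to compare two level-$p$ solutions pairwise rather than to pin down a single one. Given $Q_1,Q_2\in\Gamma(G)$ of level $p$ producing mates $H$ and $F$, one shows that $Q_1^{T}Q_2$ is a permutation matrix, whence $A(F)=(Q_1^{T}Q_2)^{T}A(H)(Q_1^{T}Q_2)$ and $H\cong F$. This is the route taken by the paper (Corollary~\ref{corollary:linear-basis} together with Lemmas~\ref{lemma:uv}, \ref{lemma:p-out}, and \ref{lemma:same-level}): all columns of $pQ_1$ and of $pQ_2$ lie in the \emph{same} one-dimensional subspace $\ker_p\bigl(W(G)^{T}\bigr)$, which depends only on $G$ and not on the mate, and each column has norm divisible by $p^{2}$ because $(pQ_i)^{T}(pQ_i)=p^{2}I$. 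The isotropy computation of Lemma~\ref{lemma:uv} then yields $(pQ_1)^{T}(pQ_2)\equiv 0 \pmod{p^{2}}$, so by Lemma~\ref{lemma:level-mod} the level of $Q_1^{T}Q_2$ divides $p^{2}/p^{2}=1$; a regular integral orthogonal matrix is a permutation matrix. Your proposal already contains every ingredient needed for this argument---the rank-one reduction, the isotropy of the kernel vector, the norm condition---but aims them at the wrong statement; redirecting them at the product $Q_1^{T}Q_2$ closes the gap.
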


Extending these results, we introduce a broader family of graphs, denoted \(\mathcal{F}_n\), which strictly contains \(\mathcal{H}_n\). Specifically, \(\mathcal{F}_n\) consists of all graphs \(G\) of order \(n\) such that \(2^{-\lfloor n/2 \rfloor} \det W(G)\) is odd and cube-free, and \(\operatorname{rank}_p W(G) = n-1\) for every odd prime \(p\) dividing \(\det W(G)\). Our main theorem establishes an explicit upper bound on the number of non-isomorphic generalized cospectral mates for graphs in this family:

\begin{theorem}
    \label{theorem:main}
    Let \( G \in \mathcal{F}_n \). Then \( G \) has at most \( 2^k - 1 \) non-isomorphic generalized cospectral mates, where \( k \) is the number of distinct odd primes whose square divides \( \det W(G) \).
\end{theorem}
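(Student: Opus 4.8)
The plan is to convert the enumeration of generalized cospectral mates into the enumeration of a restricted class of rational orthogonal matrices, and then to control that class one prime at a time using the rank hypothesis defining \(\mathcal{F}_n\). First I would invoke the standard correspondence (Wang--Xu, \cite{wang2006sufficient}): since \(2^{-\lfloor n/2\rfloor}\det W(G)\) is odd and hence nonzero, \(W(G)\) is nonsingular and \(G\) is controllable, so a graph \(H\) is a generalized cospectral mate of \(G\) if and only if there is a regular rational orthogonal matrix \(Q\) (that is, \(Q^{T}Q=I\), \(Qe=e\), \(Q\) rational) with \(Q^{T}A(G)Q=A(H)\); moreover this \(Q\) is unique, equals \(W(H)W(G)^{-1}\), and satisfies \(W(H)=Q^{T}W(G)\). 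Two such matrices \(Q,Q'\) produce isomorphic mates exactly when \(Q'=QR\) for a permutation matrix \(R\), so it suffices to bound the number of right-permutation classes of admissible \(Q\). Introducing the \emph{level} \(\ell(Q)\), the least positive integer with \(N:=\ell Q\) integral, note that \(\ell=1\) makes \(Q\) an integral orthogonal matrix with unit row sums, hence a permutation, so \(H\cong G\); thus every proper mate has \(\ell>1\).

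Next I would show that the level of any proper mate is an odd square-free divisor of \(P:=p_{1}\cdots p_{k}\), where \(p_{1},\dots,p_{k}\) are the odd primes whose square divides \(\det W(G)\). Oddness of \(\ell\) follows from \(2^{-\lfloor n/2\rfloor}\det W(G)\) being odd, handling the prime \(2\) (cf. \cite{wang2017simple}). For an odd prime \(p\mid\ell\), the identity \(N^{T}W(G)=\ell\,W(H)\) gives \(N^{T}W(G)\equiv 0\pmod{p}\); since \(\operatorname{rank}_p W(G)=n-1\), the reduction \(N\bmod p\) has rank one, and its column and row spaces are the one-dimensional orthogonal complements of the column spaces of \(W(G)\) and \(W(H)\) modulo \(p\). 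Feeding this rank-one reduction into \(NN^{T}=\ell^{2}I\) and into the companion relation \(N\,W(H)=\ell\,W(G)\), a \(p\)-adic valuation count forces \(v_{p}(\ell)=1\) together with \(p^{2}\mid\det W(G)\); with cube-freeness this is sharp. Consequently every proper mate satisfies \(\ell(Q)\mid P\) with \(\ell>1\), and there are exactly \(2^{k}-1\) such square-free divisors.

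The crux is then to prove that each level is realized by at most one isomorphism class of mate, which yields the bound \(2^{k}-1\). Fix \(\ell\mid P\) with \(\ell>1\). The previous step gives, for each \(p\mid\ell\), a factorization \(N\equiv c_{p}\,y_{p}z_{p}^{T}\pmod p\) in which \(y_{p}\) is intrinsic to \(G\) (the line orthogonal to the column space of \(W(G)\) mod \(p\)) and \(z_{p}\) encodes \(H\). The aim is to lift these local rank-one data, via the Chinese Remainder Theorem across the primes dividing \(\ell\), to the integral matrix \(N\) and to show that two admissible \(Q\) of the same level differ by right multiplication by a permutation, hence give isomorphic graphs. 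The one-dimensionality coming from \(\operatorname{rank}_p W(G)=n-1\) is exactly what eliminates every degree of freedom beyond an overall scalar, which the normalization \(Qe=e\) and integrality of \(N\) then pin down. Combining this uniqueness-per-level with the injection \(Q\mapsto\ell(Q)\) into the \(2^{k}-1\) nontrivial square-free divisors of \(P\) completes the argument.

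I expect the obstacle to lie entirely in this uniqueness-per-level step: passing from the mod-\(p\) rank-one description to the actual integral \(Q\), and proving that equal-level mates coincide up to permutation rather than merely being cospectral, is the delicate part, and the multi-prime regime additionally requires assembling the local pictures by CRT and verifying their mutual compatibility (the natural analogue of the single-prime analysis underlying Theorem \ref{theorem:wang}). A secondary subtlety worth flagging is that distinct square-free divisors may yield isomorphic graphs or no graph at all, so \(2^{k}-1\) is only an upper bound, not an equality. As consistency checks, the specialization \(k=0\) recovers the square-free DGS case, and \(k=1\) recovers Theorem \ref{theorem:wang} for \(\mathcal{H}_{n}\).
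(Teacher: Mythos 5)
Your high-level framework coincides with the paper's: use the Wang--Xu correspondence to replace mates by regular rational orthogonal matrices $Q\in\Gamma(G)$, show every such $Q$ has odd, square-free level supported on the $k$ primes whose square divides $\det W(G)$, and then inject isomorphism classes of mates into the $2^k-1$ nontrivial admissible levels. That reduction, and your treatment of the level restrictions (oddness via the $2$-adic condition; $v_p(\ell)\le 1$ via $\operatorname{rank}_p W(G)=n-1$ together with cube-freeness), correspond to results the paper itself imports from the literature, so that part stands.

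The genuine gap is precisely where you flagged it: the uniqueness-per-level claim is never proved, and the route you sketch would not close it. CRT assembly of the mod-$p$ rank-one factorizations $N\equiv c_p\,y_p z_p^T \pmod{p}$ only determines $N=\ell Q$ modulo $\ell$, and an integral matrix is far from determined by its residue modulo $\ell$; the normalization $Qe=e$ plus integrality of $N$ does not "eliminate the remaining degrees of freedom," and mod-$p$ information alone cannot force $Q_1^TQ_2$ to be integral. The paper's actual mechanism, which is the core of its contribution, is an orthogonality-driven upgrade from mod $p$ to mod $p^2$: if $u,v$ are integer vectors, nonzero and linearly dependent mod $p$, with $u^Tu\equiv v^Tv\equiv 0\pmod{p^2}$, then $u^Tv\equiv 0\pmod{p^2}$. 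Since $\bar Q_i^T\bar Q_i=\ell^2 I\equiv 0\pmod{p^2}$ and all columns of $\bar Q_1,\bar Q_2$ lie on the same line mod $p$ (the orthogonal complement of $\operatorname{col}_p W(G)$, intrinsic to $G$ because $\operatorname{rank}_p W(G)=n-1$), this lemma gives $\bar Q_1^T\bar Q_2\equiv 0\pmod{p^2}$, hence $\ell(Q_1^TQ_2)\mid \ell^2/p^2$ for every $p\mid\ell$; square-freeness of $\ell$ then forces $\ell(Q_1^TQ_2)=1$, so $Q_1^TQ_2$ is a permutation and equal-level mates are isomorphic. With only the mod-$p$ picture you describe, one gets at best $\ell(Q_1^TQ_2)\mid \ell^2/p$ per prime, which never collapses the level to $1$; without that $p^2$ congruence the injection of mates into levels, and hence the bound $2^k-1$, remains unestablished.
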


The subsequent sections lay the groundwork for our main results. In Section~\ref{section:prelim}, we review essential background concepts and technical lemmas from the literature that underpin our approach. Section~\ref{section:primitive} introduces the notion of primitive matrices and explores their structural properties, which are crucial for bounding the number of generalized cospectral mates. Section~\ref{section:proof} presents the proof of Theorem~\ref{theorem:main}, detailing the arithmetic arguments and matrix-theoretic techniques involved. We then provide illustrative examples in Section~\ref{section:examples} to demonstrate the application of our results. Section~\ref{section:conclusion} concludes the paper with a summary of our findings and potential directions for future research.

\section{Preliminaries}
\label{section:prelim}
In this section, we present key results from the literature that will be utilized to prove Theorem ~\ref{theorem:main}. A matrix $Q$ is called regular if $Qe=e$ where $e$ is the all-ones matrix. The following theorem offers a simple characterization of when two oriented graphs have the same generalized spectrum:


\begin{theorem}
\label{theorem:q}
\cite{wang2006sufficient} Let \( G \) be graph. Then there exists a graph \( H \) such that \( G \) and \( H \) have the same generalized spectrum if and only if there exists a regular rational orthogonal matrix \( Q \) such that:
\begin{equation}
\label{eq:q}
Q^T A(G) Q = A(H),
\end{equation}
Moreover, if $G$ is controllable with respect to adjacency matrix, $Q$ is unique and $Q=W(G) W(H)^{-1}$.
\end{theorem}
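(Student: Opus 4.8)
The plan is to prove the two implications separately, with the forward direction ($\Rightarrow$) carrying the substance and the ``moreover'' clause emerging as a byproduct of the controllable construction. For the easy direction ($\Leftarrow$), suppose $Q$ is a regular rational orthogonal matrix with $Q^T A(G) Q = A(H)$. Orthogonal similarity gives $\operatorname{Spec}(G) = \operatorname{Spec}(H)$ at once. For the complements, I would first observe that $Qe = e$ together with orthogonality forces $Q^T e = Q^T Q e = e$, hence $Q^T J Q = (Q^T e)(e^T Q) = e e^T = J$. Consequently $Q^T A(\overline{G}) Q = Q^T (J - I - A(G)) Q = J - I - A(H) = A(\overline{H})$, so $\operatorname{Spec}(\overline{G}) = \operatorname{Spec}(\overline{H})$ and the generalized spectra coincide.

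The core of the forward direction is a reduction of ``same generalized spectrum'' to two arithmetic conditions on $A(G)$ and $A(H)$. Writing $m_k := e^T A(G)^k e$ for the total walk counts and applying the matrix determinant lemma to the rank-one perturbation $J = e e^T$ inside $xI - A(\overline{G}) = (x+1)I + A(G) - e e^T$, I would derive
\[
\phi_{\overline{G}}(x) = (-1)^n \phi_G(-x-1)\left(1 + e^T\big((-x-1)I - A(G)\big)^{-1} e\right),
\]
which expresses $\phi_{\overline{G}}$ through $\phi_G$ and the walk generating function $e^T(yI - A(G))^{-1} e = \sum_{k \ge 0} m_k\, y^{-k-1}$. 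Since this relation is invertible (one recovers the generating function, hence all $m_k$, from $\phi_G$ and $\phi_{\overline{G}}$), the equality of generalized spectra holds if and only if $\phi_G = \phi_H$ and $e^T A(G)^k e = e^T A(H)^k e$ for all $k$. Because the $(i,j)$ entry of $W(G)^T W(G)$ equals $m_{i+j-2}$, and because a shared characteristic polynomial propagates the moment recurrence past degree $n-1$ via Cayley--Hamilton, this pair of conditions is equivalent to $\phi_G = \phi_H$ together with the Gram identity $W(G)^T W(G) = W(H)^T W(H)$.

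With this reduction, I would construct $Q$. The structural identity $A(G) W(G) = W(G) C$, where $C$ is the companion matrix of $\phi_G$ (Cayley--Hamilton writes $A(G)^n e$ in terms of the earlier columns), holds verbatim for $H$ with the \emph{same} $C$, since $\phi_G = \phi_H$. In the controllable case $\det W(G) \neq 0$, the Gram identity forces $\det W(H) \neq 0$ as well, so I set $Q := W(G) W(H)^{-1}$. This $Q$ is rational, satisfies $Q^T Q = (W(H)^T)^{-1} W(G)^T W(G)\, W(H)^{-1} = I$ by the Gram identity, fixes $e$ because the first columns of $W(G)$ and $W(H)$ are both $e$ (so $W(H)^{-1} e$ is the first standard basis vector), and conjugates $A(G)$ to $A(H)$ through the common $C$. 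This yields existence; moreover, any orthogonal $Q$ with $Q^T A(G) Q = A(H)$ and $Qe = e$ satisfies $A(G)^k e = Q\, A(H)^k e$ for all $k$, i.e.\ $W(G) = Q\, W(H)$ column by column, giving the uniqueness $Q = W(G) W(H)^{-1}$ of the ``moreover'' clause. For the general non-controllable case, I would decompose $\mathbb{R}^n$ into the walk space $\mathcal{W}_G = \operatorname{span}\{A(G)^k e\}$ and its orthogonal complement: both are $A(G)$-invariant, the moment equality makes the cyclic pairs $(A(G)|_{\mathcal{W}_G}, e)$ and $(A(H)|_{\mathcal{W}_H}, e)$ orthogonally equivalent while fixing $e$, and $\phi_G = \phi_H$ forces the complementary restrictions to share a spectrum and hence be orthogonally similar; assembling the two blocks produces the required $Q$.

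The main obstacle is precisely this non-controllable case, and within it securing \emph{rationality} of $Q$: the walk-space decomposition naturally delivers a \emph{real} orthogonal matrix, and one must argue that the integral structure of $A(G)$, $A(H)$, and $e$ permits a rational choice. I expect to address this by working over $\mathbb{Q}$ from the outset---selecting rational bases of $\mathcal{W}_G$ and $\mathcal{W}_H$ adapted to the Hankel moment form and invoking rational orthogonal similarity of the complementary integral symmetric blocks---rather than descending from a real solution of the associated orthogonal-similarity variety. The controllable case, which is all that the applications to $\mathcal{F}_n$ require (since $\det W(G) \neq 0$ there), is by contrast fully explicit and free of this difficulty.
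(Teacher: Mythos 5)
The paper itself offers no proof of Theorem~\ref{theorem:q}: it is imported verbatim from Wang and Xu \cite{wang2006sufficient}, whose hard direction in turn rests on a rationality theorem of Johnson and Newman, so your proposal can only be measured against that literature proof. Measured so, most of what you write is correct and is the standard treatment: the backward direction via $Q^TJQ=J$; the determinant-lemma identity relating $\phi_{\overline{G}}$ to $\phi_G$ and the walk generating function (your formula checks out); the resulting reduction of generalized cospectrality to $\phi_G=\phi_H$ together with $e^TA(G)^ke=e^TA(H)^ke$ for all $k$, and its equivalence, via Cayley--Hamilton, with the Gram identity $W(G)^TW(G)=W(H)^TW(H)$; and the controllable case, where $\det W(H)\neq 0$ is forced by the Gram identity, $Q=W(G)W(H)^{-1}$ is rational, orthogonal, regular, and conjugates $A(G)$ to $A(H)$ through the common companion matrix, while $W(G)=QW(H)$ gives uniqueness. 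Since every $G\in\mathcal{F}_n$ is controllable ($2^{-\lfloor n/2\rfloor}\det W(G)$ odd forces $\det W(G)\neq 0$), this is also all the present paper actually uses.

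The genuine gap is the non-controllable case, and it is worse than the missing detail you flag: the repair you sketch invokes a false lemma. You propose to conclude from $\phi_G=\phi_H$ that the restrictions of $A(G)$ and $A(H)$ to $\mathcal{W}_G^{\perp}$ and $\mathcal{W}_H^{\perp}$ are \emph{rationally} orthogonally similar, but two rational symmetric matrices with the same characteristic polynomial need not be: take $A=\operatorname{diag}(1,-1)$ and $B=\left(\begin{smallmatrix}0&1\\1&0\end{smallmatrix}\right)$, both with characteristic polynomial $x^2-1$. Every rational orthogonal $2\times 2$ matrix is $\left(\begin{smallmatrix}c&-s\\s&c\end{smallmatrix}\right)$ or $\left(\begin{smallmatrix}c&s\\s&-c\end{smallmatrix}\right)$ with $c^2+s^2=1$, and conjugating $A$ by either gives diagonal entries $\pm(c^2-s^2)$ and off-diagonal entries $\pm 2cs$; matching $B$ forces $c^2=s^2$ and $2|cs|=1$, hence $s^2=\tfrac12$, impossible over $\mathbb{Q}$. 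Note that since $e\in\mathcal{W}_G$, the complementary blocks carry no regularity constraint, so the unrestricted statement is exactly what your strategy needs. A second obstruction is that $\mathcal{W}_G^{\perp}$, though a rational subspace, generally admits no rational \emph{orthonormal} basis (e.g.\ $\operatorname{span}\{(1,1)^T\}$ contains no rational unit vector), so your ``integral symmetric blocks'' are not even well defined; in a general rational basis the problem becomes $\mathbb{Q}$-equivalence of quadratic forms, i.e.\ genuinely arithmetic rather than spectral. The conclusion you want is in fact true in this setting, but only because of the global argument you explicitly set aside: Johnson and Newman first produce a real orthogonal solution, then pass to the affine space $\{R : A(G)R=RA(H),\ Re=e\}$, which is defined over $\mathbb{Q}$ and hence has rational points dense among its real points; a rational $R$ near the real solution is invertible, $M=R^TR$ is rational, positive definite, and commutes with $A(H)$, and the arithmetic content is absorbed in factoring $M$ away inside the commutant. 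Blockwise rationality from equal spectra alone cannot substitute for this step.
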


Based on this theorem, we define the following set:
\begin{equation*}
\Gamma(G) = \{Q \in O_n(\mathbb{Q}) \mid Q^T A(G) Q = A(H) \text{ for graph } H\},
\end{equation*}
where \(O_n(\mathbb{Q})\) is the set of all regular orthogonal matrices with rational entries. We define the level of a matrix as follows:

\begin{definition}
Let \(Q\) be an orthogonal matrix with rational entries. The \emph{level} of \(Q\), denoted by \(\ell(Q)\) (or simply \(\ell\)), is the smallest positive integer \(x\) such that \(xQ\) is an integral matrix.
\end{definition}

The concept of the level of a matrix is central to understanding the spectral characterization of graphs. Specifically, if the level of a regular rational orthogonal matrix $Q$ is one, then $Q$ must be a permutation matrix. In this case, if $Q^T A(G) Q = A(H)$ for graphs $G$ and $H$, it follows that $G$ and $H$ are isomorphic.

Another important tool in this context is the Smith Normal Form (SNF) of integral matrices. An integral matrix \(V\) of order \(n\) is called \emph{unimodular} if \(\det{V} = \pm 1\). For any full-rank integral matrix \(M\), there exist unimodular matrices \(V_1\) and \(V_2\) such that \(M = V_1 N V_2\), where \(N = \operatorname{diag}(d_1(M), d_2(M), \ldots, d_n(M))\) is the SNF of \(M\), and the invariant factors satisfy \(d_i(M) \mid d_{i+1}(M)\) for \(i = 1, 2, \ldots, n-1\). The determinant of $M$ can thus be written as
\begin{equation}
    \det{M} = \pm \prod_{i=1}^{n} d_i(M).
\end{equation}

For a prime $p$, the rank of $M$ over $\mathbb{F}_p$ is equal to the number of invariant factors of $M$ not divisible by $p$. This relationship is useful for analyzing the structure of $M$ modulo $p$. The following lemma establishes a connection between the level of a rational orthogonal matrix and the last invariant factor of an associated integral matrix.

\begin{lemma}
\label{lemma:level-dn}
\cite{qiu2023smith}
Let $X$ and $Y$ be two non-singular integral matrices such that $QX = Y$, where $Q$ is a rational orthogonal matrix. Then $\ell(Q)$ divides the greatest common divisor of $d_n(X)$ and $d_n(Y)$, where $d_n(X)$ and $d_n(Y)$ are the $n$-th invariant factors of $X$ and $Y$, respectively.
\end{lemma}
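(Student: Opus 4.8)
The plan is to reduce the statement to two separate divisibilities, $\ell(Q)\mid d_n(X)$ and $\ell(Q)\mid d_n(Y)$, and then combine them. Two elementary observations drive the argument. First, the set of integers $m$ for which $mQ$ is integral is closed under addition and under multiplication by integers, hence forms an ideal of $\mathbb{Z}$; its positive generator is exactly $\ell(Q)$. Consequently, to prove $\ell(Q)\mid m$ it suffices to exhibit that $mQ$ is an integral matrix. Second, for any non-singular integral matrix $X$ the last invariant factor $d_n(X)$ is precisely the smallest positive integer $m$ such that $mX^{-1}$ is integral; in particular $d_n(X)\,X^{-1}$ is an integral matrix. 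This follows by writing $X=V_1 N V_2$ with $N=\operatorname{diag}(d_1(X),\dots,d_n(X))$ and $V_1,V_2$ unimodular: then $d_n(X)\,X^{-1}=V_2^{-1}\operatorname{diag}\!\big(d_n(X)/d_1(X),\dots,1\big)V_1^{-1}$ is integral because $d_i(X)\mid d_n(X)$ for every $i$.

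With these in hand, the first divisibility is immediate. Since $X$ is non-singular we may write $Q=YX^{-1}$, so that
\begin{equation*}
d_n(X)\,Q \;=\; Y\,\big(d_n(X)\,X^{-1}\big).
\end{equation*}
The right-hand side is a product of two integral matrices, hence integral, and the ideal observation yields $\ell(Q)\mid d_n(X)$.

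For the second divisibility I would invoke the orthogonality of $Q$, which is the one place where that hypothesis is essential. Because $Q$ is orthogonal, $Q^{-1}=Q^{T}$, and since $(\ell Q)^{T}=\ell Q^{T}$ the matrix $\ell Q^{T}$ is integral exactly when $\ell Q$ is; thus $\ell(Q^{T})=\ell(Q)$. Transposing the relation $QX=Y$ gives $X=Q^{T}Y$, whence $Q^{T}=XY^{-1}$ and
\begin{equation*}
d_n(Y)\,Q^{T} \;=\; X\,\big(d_n(Y)\,Y^{-1}\big),
\end{equation*}
again a product of integral matrices. The ideal observation now gives $\ell(Q)=\ell(Q^{T})\mid d_n(Y)$. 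Combining the two divisibilities, $\ell(Q)$ divides both $d_n(X)$ and $d_n(Y)$, hence their greatest common divisor, as claimed.

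I expect the only genuinely substantive point to be the characterization of $d_n(X)$ as the minimal integer clearing the denominators of $X^{-1}$; everything else is bookkeeping with integral matrix products and the ideal structure of the level. It is worth stressing that orthogonality enters solely to identify $\ell(Q^{T})$ with $\ell(Q)$—for a general invertible rational $Q$ one would only obtain $\ell(Q)\mid d_n(X)$ and $\ell(Q^{-1})\mid d_n(Y)$, with no obvious link between the two levels.
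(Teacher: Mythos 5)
The paper does not prove this lemma at all---it is quoted verbatim from Qiu et al.\ \cite{qiu2023smith}---and your argument is correct and coincides with the standard proof of that cited result: $d_n(X)X^{-1}$ and $d_n(Y)Y^{-1}$ are integral by the Smith Normal Form, so $d_n(X)\,Q = Y\bigl(d_n(X)X^{-1}\bigr)$ and $d_n(Y)\,Q^{T} = X\bigl(d_n(Y)Y^{-1}\bigr)$ are integral, and $\ell(Q^{T})=\ell(Q)$ by orthogonality, giving both divisibilities. One cosmetic slip: $X = Q^{T}Y$ follows from left-multiplying $QX=Y$ by $Q^{T}$ and using $Q^{T}Q=I$, not from ``transposing'' (which would give $X^{T}Q^{T}=Y^{T}$); the conclusion is unaffected.
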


Building on these observations, we note that for every \( Q \in \Gamma(G) \), the level \(\ell(Q)\) must divide \(d_n(W(G))\), the last invariant factor of the walk matrix \(W(G)\). However, certain arithmetic properties of \(W(G)\) can restrict which divisors of \(d_n(W(G))\) are actually attainable as levels for matrices in \(\Gamma(G)\). Specifically, for nonzero integers \(n\) and \(m\) and a positive integer \(k\), we write \(m^k \dmid n\) to indicate that \(m^k\) divides \(n\) exactly; that is, \(m^k \mid n\) but \(m^{k+1} \nmid n\).

\begin{lemma}
\label{lemma:2-out}
\cite{wang2013generalized}
Let $G$ be a graph and let $Q \in \Gamma(G)$. If $2^{\lfloor n/2 \rfloor} \dmid \det{W(G)}$, then $\ell(Q)$ is odd.
\end{lemma}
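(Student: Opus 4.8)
The plan is to reduce the statement to a computation over $\mathbb{F}_2$ controlled by the Smith normal form of $W(G)$. First I would record the arithmetic consequences of the hypothesis $2^{\lfloor n/2 \rfloor} \dmid \det W(G)$. It is standard that $2^{\lfloor n/2 \rfloor}$ always divides $\det W(G)$, so exactness of the $2$-adic valuation forces the $2$-part of the invariant factors to be as small as possible: the first $\lceil n/2 \rceil$ invariant factors $d_i(W(G))$ are odd, while each of the remaining $\lfloor n/2 \rfloor$ is divisible by $2$ but not by $4$. In particular $\operatorname{rank}_2 W(G) = \lceil n/2 \rceil$ and $4 \nmid d_n(W(G))$. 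Since $Q \in \Gamma(G)$ yields $W(H) = Q^{T} W(G)$ for some graph $H$ with $\det W(H) = \pm\det W(G)$, the same description applies verbatim to $W(H)$.

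Next I would argue by contradiction, assuming $\ell := \ell(Q)$ is even. By Lemma~\ref{lemma:level-dn}, $\ell \mid d_n(W(G))$, and since $4 \nmid d_n(W(G))$ we may write $\ell = 2m$ with $m$ odd. Put $R = \ell Q$, an integral matrix; minimality of the level forces $R \not\equiv 0 \pmod 2$, i.e. $\bar R \neq 0$ over $\mathbb{F}_2$. Orthogonality and regularity of $Q$ give $R R^{T} = R^{T} R = \ell^{2} I$ and $R e = R^{T} e = \ell e$. Reducing modulo $2$ (recall $\ell$ is even) yields $\bar R \bar R^{T} = \bar R^{T}\bar R = 0$, so both the row space and the column space of $\bar R$ are totally isotropic for the standard bilinear form on $\mathbb{F}_2^{n}$, whence $\operatorname{rank}_2 R \le \lfloor n/2 \rfloor$. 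From the relations $R\,W(H) = \ell\,W(G)$ and $R^{T} W(G) = \ell\,W(H)$, reduction modulo $2$ gives $\bar R\,\overline{W(H)} = 0$ and $\bar R^{T}\overline{W(G)} = 0$; that is, the $\mathbb{F}_2$-column spaces $C_G := \operatorname{colspace}\overline{W(G)}$ and $C_H := \operatorname{colspace}\overline{W(H)}$, each of dimension $\lceil n/2 \rceil$, satisfy $C_H \subseteq \ker \bar R$ and $C_G \subseteq \ker \bar R^{T}$.

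The crux is to convert these constraints into the contradiction $\bar R = 0$, and the step I expect to carry the real weight is promoting the inclusions to $\operatorname{colspace}\bar R \subseteq C_G$ and $\operatorname{rowspace}\bar R \subseteq C_H$. This should follow from the $2$-local Smith normal forms $W(G) = U_1 D U_2$ and $W(H) = U_1' D U_2'$, where $U_1,U_2,U_1',U_2'$ are invertible modulo $2$ and $D = \operatorname{diag}(1,\dots,1,2,\dots,2)$ has exactly $\lceil n/2 \rceil$ unit entries: feeding these factorizations into $\ell\,W(G) = R\,W(H)$ and cancelling the common factor of $2$ exhibits $\bar R$ as $\bar U_1$ times a matrix supported in the block cut out by $D$, which is precisely $C_G = \bar U_1(\mathbb{F}_2^{\lceil n/2\rceil}\oplus 0)$. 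Combined with the total isotropy of $\operatorname{colspace}\bar R$ and the inclusion $C_G \subseteq (\operatorname{colspace}\bar R)^{\perp}$ from the previous step, this pins $\bar R$ into a rigid position. The genuine obstacle is then the final extraction: the reduction modulo $2$ alone is consistent with a small nonzero $\bar R$, so one must read $R R^{T} = \ell^{2} I$ modulo $4$ rather than modulo $2$, and it is the interaction between this second-order orthogonality relation and the exact elementary-divisor pattern $\operatorname{diag}(1,\dots,1,2,\dots,2)$ that forces $\bar R = 0$. That contradiction rules out even $\ell(Q)$ and proves the lemma.
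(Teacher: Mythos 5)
First, a point of comparison: the paper does not prove this lemma at all—it is quoted directly from \cite{wang2013generalized}—so your attempt must be measured against the argument in that reference, which is one of the genuinely hard technical results in this line of work. Your preliminary reductions are mostly sound (with one slip: exact $2$-adic valuation of $\det W(G)$ alone does not force the invariant-factor pattern $\operatorname{diag}(\text{odd},\dots,\text{odd},2,\dots,2)$; you also need the standard rank bound $\operatorname{rank}_2 W \le \lceil n/2\rceil$, which is what makes ``as small as possible'' forced). The fatal gap is the final step, which you explicitly defer: you assert that reading $RR^T=\ell^2 I$ modulo $4$ against the elementary-divisor pattern forces $R\equiv 0 \pmod 2$. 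This is not merely unproven—it is false, because the constraints you assemble are simultaneously satisfiable with $R\not\equiv 0\pmod 2$. Take $n=4$ and
\[
R=\begin{pmatrix}1&1&1&-1\\1&1&-1&1\\1&-1&1&1\\-1&1&1&1\end{pmatrix},\qquad
Y=\begin{pmatrix}1&1&2&0\\1&1&0&0\\1&0&0&2\\1&0&0&0\end{pmatrix},\qquad
X=\tfrac12\,RY=\begin{pmatrix}1&1&1&1\\1&1&1&-1\\1&0&1&1\\1&0&-1&1\end{pmatrix}.
\]
Then $Q=R/2$ is a regular rational orthogonal matrix of level exactly $2$ (indeed $RR^T=R^TR=4I$ and $Re=R^Te=2e$), with $QY=X$ and $Q^TX=Y$; both $X$ and $Y$ are integral with determinant $\pm 4$ and $\operatorname{rank}_2=2$, hence both have Smith normal form $\operatorname{diag}(1,1,2,2)$—exactly the pattern your hypothesis produces for $W(G)$ and $W(H)$—and automatically $X^TX=Y^TY$. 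Yet $R\bmod 2$ is the all-ones matrix, not zero. Every constraint in your plan holds here: the column and row spaces of $R\bmod 2$ are totally isotropic, contained in and orthogonal to the mod-$2$ column spaces of $X$ and $Y$ respectively, and $RR^T\equiv 0\pmod 4$.

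The conclusion is that no proof can succeed that uses the hypothesis only through the $2$-local Smith structure of $W(G)$ and $W(H)$ together with orthogonality and regularity of $Q$—which is precisely the framework you set up. The lemma is a statement about \emph{walk matrices}, and the proof in \cite{wang2013generalized} leans on arithmetic that is special to them: the Gram matrix $W^TW$ has entries $e^TA^{i+j-2}e$ (total walk counts), which satisfy strong congruences (for instance $e^TA^ke\equiv 0\pmod 2$ for all $k\ge 1$, with finer mod-$4$ refinements), and it is the interplay of these walk-count congruences with the orthogonality of $Q$ that rules out an even level. Your matrices $X,Y$ above violate those finer congruences, which is exactly why they can exist. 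So the ``genuine obstacle'' you correctly identified cannot be overcome by the mod-$4$ orthogonality argument you propose; the missing ingredient is the walk-count arithmetic, and without it the proof does not close.
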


\begin{lemma}
\label{lemma:pk-out}
\cite{qiu2023smith}
Let $G$ be a graph, let \( Q \in \Gamma(G) \) and let \( p \) be an odd prime. If $p^k \dmid \det{W(G)}$ and $\operatorname{rank}_p(W(G)) = n-1$, then \( p^{k} \nmid \ell(Q) \) or equivalently, \( \ell(Q) = \frac{d_n(W(G))}{p} \).
\end{lemma}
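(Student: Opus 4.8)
The plan is to translate the rank hypothesis into information about the invariant factors of the walk matrix, use Lemma~\ref{lemma:level-dn} to get a first non-strict bound on the $p$-adic valuation of $\ell(Q)$, and then upgrade this to the strict bound by exploiting the orthogonality and regularity of $Q$. First I would record what $\operatorname{rank}_p W(G)=n-1$ says: since the rank of $W(G)$ over $\mathbb{F}_p$ equals the number of invariant factors coprime to $p$, exactly one invariant factor is divisible by $p$, and by the chain $d_i\mid d_{i+1}$ it must be $d_n(W(G))$. Because $\det W(G)=\pm\prod_i d_i(W(G))$ and $p^k\dmid\det W(G)$, this forces $p\nmid d_i(W(G))$ for $i<n$ and $p^k\dmid d_n(W(G))$; in particular $v_p(d_n(W(G)))=k$.

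Next, for $Q\in\Gamma(G)$ Theorem~\ref{theorem:q} supplies a graph $H$ with $Q\,W(H)=W(G)$, and since $\det W(G)\neq 0$ the graph $G$ is controllable, so $W(H)$ is integral and non-singular. Applying Lemma~\ref{lemma:level-dn} with $X=W(H)$ and $Y=W(G)$ gives $\ell(Q)\mid\gcd\!\bigl(d_n(W(H)),d_n(W(G))\bigr)$, hence $\ell(Q)\mid d_n(W(G))$ and therefore $v_p(\ell(Q))\le k$. This is the easy half. Note also that, once $\ell(Q)\mid d_n(W(G))$ is known, the conclusion $p^{k}\nmid\ell(Q)$ is equivalent to $\ell(Q)\mid d_n(W(G))/p$ (using $p^k\dmid d_n(W(G))$), which matches the stated reformulation.

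The crux is to rule out $v_p(\ell(Q))=k$. Here I would set $R=\ell(Q)\,Q$, an integral matrix satisfying $R^{T}R=\ell(Q)^2 I$ and $Re=\ell(Q)\,e$. Assuming $p\mid\ell(Q)$, minimality of the level forces $\bar R:=R\bmod p\neq 0$, while reducing $R^{T}W(G)=\ell(Q)\,W(H)$ modulo $p$ gives $\bar R^{T}\bar W(G)=0$; since $\operatorname{rank}_p W(G)=n-1$, the column space of $\bar W(G)$ has a one-dimensional orthogonal complement, into which $\operatorname{col}(\bar R)$ must fall, so $\operatorname{rank}_p\bar R=1$ and exactly one $p$-adic elementary divisor of $R$ is a unit. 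On the other hand, from $R^{-1}=\ell(Q)^{-2}R^{T}$ the elementary-divisor exponents $s_1\le\cdots\le s_n$ of $R$ are symmetric, $s_i+s_{n+1-i}=2\,v_p(\ell(Q))$, so $s_1=0$ forces $s_n=2\,v_p(\ell(Q))$. I would then feed this profile back through $R\,W(H)=\ell(Q)\,W(G)$ and compare last invariant factors, aiming to show that the pairing $s_n=2\,v_p(\ell(Q))$ cannot be reconciled with $W(G)$ possessing a \emph{single} $p$-divisible invariant factor of exponent exactly $k$ unless $2\,v_p(\ell(Q))\le k$, whence $v_p(\ell(Q))\le\lfloor k/2\rfloor\le k-1$ and $p^{k}\nmid\ell(Q)$.

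The main obstacle is exactly this last reconciliation: converting the orthogonality-induced symmetry of the elementary divisors into a genuinely \emph{strict} inequality. The non-strict bound $v_p(\ell(Q))\le k$ drops out formally from Lemma~\ref{lemma:level-dn}, but squeezing out the extra factor of $p$ requires the finer arithmetic of the quadratic form $x\mapsto x^{T}x$ over $\mathbb{F}_p$ with $p$ odd — the isotropy of the rank-one image of $\bar R$ together with the invertibility of $2$ — and this is precisely where the oddness of $p$, treated separately for $p=2$ in Lemma~\ref{lemma:2-out}, is indispensable.
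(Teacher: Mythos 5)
First, note that the paper contains no proof of this lemma at all: it is imported verbatim from \cite{qiu2023smith}, so your attempt can only be judged against what such a proof must accomplish. The routine parts of your proposal are correct: the rank hypothesis does force $v_p(d_i(W(G)))=0$ for $i<n$ and $v_p(d_n(W(G)))=k$ (writing $v_p$ for the $p$-adic valuation); Theorem~\ref{theorem:q} together with Lemma~\ref{lemma:level-dn} gives $\ell(Q)\mid d_n(W(G))$, hence $v_p(\ell(Q))\le k$; the reformulation as $\ell(Q)\mid d_n(W(G))/p$ is right; and your structural observations about $R=\ell(Q)Q$ --- that $\operatorname{rank}_p R=1$ once $p\mid\ell(Q)$, and that $R^TR=\ell(Q)^2I$ forces the symmetric exponent profile $s_i+s_{n+1-i}=2v_p(\ell(Q))$, whence $s_1=0$ and $s_n=2v_p(\ell(Q))$ --- are also correct.

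However, the entire content of the lemma is the strict bound $v_p(\ell(Q))\le k-1$, and this is exactly what the proposal does not prove. The mechanism you propose --- feeding the exponent profile back through $RW(H)=\ell(Q)W(G)$ and ``comparing last invariant factors'' --- cannot deliver it: every divisibility extractable from that identity carries the extra factor $\ell(Q)$, and the resulting valuations are perfectly consistent with $v_p(\ell(Q))=k$. Concretely, if $m:=v_p(\ell(Q))=k$, then $v_p(d_n(R))=2m=2k$ while $v_p\bigl(d_n(\ell(Q)W(G))\bigr)=m+k=2k$, so the comparison closes with equality and yields no contradiction; Cauchy--Binet-type minor bounds likewise only return $m\le k$. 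You also cannot lean on the invariant factors of $W(H)$, since the rank hypothesis is not inherited by the mate (the paper's Example 2 exhibits exactly this: $\operatorname{rank}_3 W(N)=8$ but $\operatorname{rank}_3 W(M)=7$). Moreover your intermediate target $2v_p(\ell(Q))\le k$ is stronger than what the lemma asserts and is never justified. What is actually required --- and what your closing paragraph names as ``the main obstacle'' without overcoming it --- is the mod-$p^2$ isotropy argument: using orthogonality, regularity, and the invertibility of $2$ modulo the odd prime $p$, one shows that $p^m\dmid\ell(Q)$ with $m\ge1$ forces $p^{m+1}\mid\det W(G)$, i.e.\ $v_p(\ell(Q))<v_p(\det W(G))=k$. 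This quadratic-form machinery over $\mathbb{F}_p$ is precisely what the paper itself develops for its primitive-matrix results (Lemmas~\ref{lemma:uv} and~\ref{lemma:p-out}) and what the cited source uses; since the proposal stops at the non-strict bound, it has a genuine gap at the crux.
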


\begin{corollary}
\label{corollary:l-struct}
Let $G\in \mathcal{F}_n$ and let \( Q \in \Gamma(G) \). Then $\ell(Q)$ is odd and square-free.
\end{corollary}

In the next section, we discuss the properties of matrices in $\Gamma(G)$.

\section{Primitive Matrices}
\label{section:primitive}

\begin{definition}
A regular rational orthogonal matrix $Q$ is called a \emph{primitive matrix} if $\ell(Q)$ is odd and for every prime $p$ dividing $\ell(Q)$, we have $\operatorname{rank}_p(\ell(Q)Q) = 1$.
\end{definition}

We will show in the next section that every matrix in $\Gamma(G)$ is primitive. We now establish several key properties of primitive matrices that are essential for our main results. The following lemmas are inspired by \cite{wang2023graphs,raza2025upper}.

\begin{lemma}
\label{lemma:level-mod}
Let \( Q \) be a primitive matrix, and let $x$ and $k$ be positive integers. If \( xQ \) is an integral matrix and \( xQ \equiv 0 \pmod{k} \), then \( \ell(Q) \mid \frac{x}{k} \).
\end{lemma}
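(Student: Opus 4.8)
The plan is to reduce the divisibility statement to a single claim about the matrix $R := \ell(Q)\,Q$, namely that the greatest common divisor of all its entries equals $1$. Write $\ell = \ell(Q)$ for brevity. First I would record the standard fact that the set of integers $n$ for which $nQ$ is integral is exactly $\ell\mathbb{Z}$: it is a subgroup of $\mathbb{Z}$ (closed under subtraction, as $Q$ is fixed) whose least positive element is, by definition of the level, $\ell$. Applying this to the hypothesis that $xQ$ is integral gives $\ell \mid x$, so I may set $m := x/\ell \in \mathbb{Z}_{>0}$ and rewrite $xQ = m R$. The congruence $xQ \equiv 0 \pmod{k}$ then says precisely that $k$ divides every entry of $mR$, i.e.\ $k \mid m\,R_{ij}$ for all $i,j$. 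Since the desired conclusion $\ell \mid x/k$ is equivalent to $k\ell \mid x = \ell m$, that is, to $k \mid m$, the whole lemma comes down to deducing $k \mid m$ from the relations $k \mid m R_{ij}$.

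The key step is the claim that the entries of $R = \ell Q$ are globally coprime. I would prove this by contradiction: suppose some prime $p$ divides every entry $R_{ij}$. Because $Q$ is regular ($Qe = e$), each row of $R$ sums to $\ell$; as $p$ divides every entry of a row, $p \mid \ell$. But then $\tfrac{1}{p}R = \tfrac{\ell}{p}Q$ is an integral matrix and $\tfrac{\ell}{p}$ is a positive integer strictly smaller than $\ell$, contradicting the minimality of the level. Hence no such $p$ exists, so $\gcd$ of the entries of $R$ is $1$. Note that this argument uses only that $Q$ is regular with a well-defined level; the rank-one part of primitivity is not required for this particular lemma.

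With the coprimality in hand, I would finish by B\'ezout: choose integers $c_{ij}$ with $\sum_{i,j} c_{ij} R_{ij} = 1$. Since $k \mid m R_{ij}$ for every $i,j$, it divides the combination $\sum_{i,j} c_{ij}\, m R_{ij} = m$, giving $k \mid m$, and therefore $k\ell \mid \ell m = x$, i.e.\ $\ell \mid x/k$, as required.

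I expect the main obstacle to be the coprimality claim, and in particular getting the quantifiers right: one must rule out a single prime dividing all entries \emph{simultaneously}, rather than argue entry by entry. Regularity is what forces any such prime to divide $\ell$, after which the minimality of the level delivers the contradiction; isolating exactly these two properties (in place of invoking orthogonality or the rank condition) is the cleanest route. The remaining bookkeeping—the equivalence $\ell \mid x/k \iff k \mid m$ and the B\'ezout combination—is routine.
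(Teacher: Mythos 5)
Your proof is correct, but it proceeds by a genuinely different route from the paper's. The paper's proof is two lines: since $k$ divides every entry of $xQ$, the matrix $\frac{x}{k}Q = \frac{1}{k}(xQ)$ is integral, and the minimality of the level (i.e., the fact that $\{\,n \in \mathbb{Z} : nQ \text{ is integral}\,\} = \ell(Q)\mathbb{Z}$) then yields $\ell(Q) \mid \frac{x}{k}$ in a single step. You instead apply that subgroup fact only to $x$ itself (writing $x = \ell m$), prove that the entries of $R = \ell Q$ are globally coprime using regularity plus minimality, and extract $k \mid m$ by B\'ezout. The extra work is not wasted: it closes a point the paper leaves implicit. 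For the paper's final step to be meaningful, $\frac{x}{k}$ must be an integer, and this does \emph{not} follow from $Q$ merely having a level: for $Q = \frac{3}{2}I$ one has $\ell(Q) = 2$, and taking $x = 2$, $k = 3$ gives $xQ = 3I$ integral and $\equiv 0 \pmod{3}$, with $\frac{x}{k}Q = I$ integral, yet $\ell(Q) \nmid \frac{x}{k} = \frac{2}{3}$. It is precisely regularity (your row-sum argument), or alternatively orthogonality via $(\ell Q)^T(\ell Q) = \ell^2 I$, that forces $k \mid x$ and makes the paper's one-liner sound for primitive matrices; your proof supplies this ingredient explicitly, and your remark that only regularity and minimality are needed is an accurate isolation of hypotheses. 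The trade-off is brevity: once $k \mid x$ is in hand, applying minimality directly to $\frac{x}{k}$, as the paper does, finishes immediately and the gcd/B\'ezout machinery becomes unnecessary.
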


\begin{proof}
Since \( xQ \equiv 0 \pmod{k} \), every entry of \( xQ \) is divisible by \( k \), so \( \frac{x}{k}Q \) is integral. By the definition of the level \( \ell(Q) \), which is the minimal positive integer such that \( \ell(Q)Q \) is integral, it follows that \( \ell(Q) \) divides \( \frac{x}{k} \). This completes the proof.
\end{proof}

\begin{lemma}
\label{lemma:uv}
Let \( u \) and \( v \) be $n$-dimensional integer column vectors, and let $p$ be an odd prime. Suppose:
\begin{enumerate}
    \item \( u \not\equiv 0 \pmod{p} \) and \( v \not\equiv 0 \pmod{p} \),
    \item \( u \) and \( v \) are linearly dependent over \( \mathbb{F}_p \),
    \item \( u^T u = v^T v \equiv 0 \pmod{p^2} \),
\end{enumerate}
then \( u^T v \equiv 0 \pmod{p^2} \).
\end{lemma}

\begin{proof}
If \( u = \pm v \), then \( u^T v = \pm u^T u \equiv 0 \pmod{p^2} \), so the result holds. Otherwise, since \( u \) and \( v \) are linearly dependent over \( \mathbb{F}_p \), there exist integers \( a \) and \( b \), not both zero modulo $p$, such that
\[
au + bv \equiv 0 \pmod{p}.
\]
Neither \( a \) nor \( b \) can be zero modulo $p$, since otherwise \( u \) or \( v \) would be zero modulo $p$, contradicting the assumptions.

Taking the inner product of both sides with themselves gives
\[
(au + bv)^T (au + bv) \equiv 0 \pmod{p^2},
\]
which expands to
\[
a^2 u^T u + 2ab u^T v + b^2 v^T v \equiv 0 \pmod{p^2}.
\]
Since \( u^T u \equiv v^T v \equiv 0 \pmod{p^2} \), this reduces to
\[
2ab\, u^T v \equiv 0 \pmod{p^2}.
\]
Because $2ab$ is not divisible by $p$, it follows that \( u^T v \equiv 0 \pmod{p^2} \).
\end{proof}

\begin{lemma}
\label{lemma:p-out}
Let $Q_1$ and $Q_2$ be primitive matrices and $p$ be an odd prime. Let \(\bar{Q}_1 = \ell(Q_1)Q_1\) and \(\bar{Q}_2 = \ell(Q_2)Q_2\). Suppose:
\begin{enumerate}
    \item \( p \) divides both \( \ell(Q_1) \) and \( \ell(Q_2) \),
    \item The column vectors of $\bar{Q}_1$ and $\bar{Q}_2$ share the same linear basis over $\mathbb{F}_p$,
    \item Both $\ell(Q_1)$ and $\ell(Q_2)$ are square-free.
\end{enumerate}
Then $p \nmid \ell(Q_1^T Q_2)$.
\end{lemma}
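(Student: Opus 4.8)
The plan is to reduce the whole claim to a single congruence on the integral matrix $\bar{Q}_1^T \bar{Q}_2$. Write $\ell_1 = \ell(Q_1)$, $\ell_2 = \ell(Q_2)$, and set $R = Q_1^T Q_2$; this $R$ is again a regular rational orthogonal matrix (regularity follows since $Q_1 e = Q_2 e = e$ and $Q_1^T Q_1 = I$ give $Q_1^T e = e$, hence $Re = e$), and
\[
\ell_1 \ell_2\, R = \bar{Q}_1^T \bar{Q}_2
\]
is integral, so $\ell(R) \mid \ell_1 \ell_2$. Hypotheses (1) and (3) give $p \dmid \ell_1$ and $p \dmid \ell_2$, hence $p^2 \dmid \ell_1 \ell_2$. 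Thus it suffices to prove
\[
\bar{Q}_1^T \bar{Q}_2 \equiv 0 \pmod{p^2},
\]
for then $\tfrac{\ell_1 \ell_2}{p^2} R = \tfrac{1}{p^2}\bar{Q}_1^T\bar{Q}_2$ is integral, which forces $\ell(R) \mid \tfrac{\ell_1\ell_2}{p^2}$ (the multipliers making $R$ integral are exactly the multiples of $\ell(R)$), and since $\tfrac{\ell_1\ell_2}{p^2}$ is coprime to $p$ we conclude $p \nmid \ell(R)$. This is the step where square-freeness is essential: it guarantees that clearing $p^2$ removes every factor of $p$.

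To establish the congruence I would argue entry by entry. The $(i,j)$ entry of $\bar{Q}_1^T \bar{Q}_2$ is $u_i^T v_j$, where $u_i$ and $v_j$ denote the $i$-th column of $\bar{Q}_1$ and the $j$-th column of $\bar{Q}_2$. Since $Q_1, Q_2$ are primitive and $p$ divides both levels, $\operatorname{rank}_p \bar{Q}_1 = \operatorname{rank}_p \bar{Q}_2 = 1$; combined with hypothesis (2) this produces a single vector $w$, nonzero mod $p$, with $u_i \equiv c_i w$ and $v_j \equiv d_j w \pmod{p}$ for scalars $c_i, d_j \in \mathbb{F}_p$. In particular any column of $\bar{Q}_1$ and any column of $\bar{Q}_2$ are linearly dependent over $\mathbb{F}_p$.

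When both $u_i \not\equiv 0$ and $v_j \not\equiv 0 \pmod{p}$, the three hypotheses of Lemma~\ref{lemma:uv} are met—nonvanishing; linear dependence over $\mathbb{F}_p$; and $u_i^T u_i = \ell_1^2 \equiv 0$, $v_j^T v_j = \ell_2^2 \equiv 0 \pmod{p^2}$—so Lemma~\ref{lemma:uv} gives $u_i^T v_j \equiv 0 \pmod{p^2}$. The main obstacle is the degenerate case, say $u_i \equiv 0 \pmod{p}$, which lies outside the scope of Lemma~\ref{lemma:uv}. Here I would write $u_i = p\, u_i'$ with $u_i'$ integral and exploit orthogonality: from $\bar{Q}_1^T \bar{Q}_1 = \ell_1^2 I$ we get $u_i^T u_{i'} = 0$ for $i' \neq i$, hence $(u_i')^T u_{i'} = 0$; choosing an index $i'$ with $u_{i'} \not\equiv 0 \pmod{p}$ (one exists since $\operatorname{rank}_p \bar{Q}_1 = 1$ and $u_i$ itself vanishes) forces $(u_i')^T w \equiv 0 \pmod{p}$, whence $(u_i')^T v_j \equiv d_j (u_i')^T w \equiv 0 \pmod{p}$ and therefore $u_i^T v_j = p\,(u_i')^T v_j \equiv 0 \pmod{p^2}$. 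The symmetric case $v_j \equiv 0 \pmod{p}$ is handled the same way using $\bar{Q}_2^T \bar{Q}_2 = \ell_2^2 I$, and the case where both columns vanish is immediate since then the entry is divisible by $p^2$ outright. Assembling all entries yields $\bar{Q}_1^T \bar{Q}_2 \equiv 0 \pmod{p^2}$, which completes the argument.
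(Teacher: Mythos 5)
Your proof is correct and follows essentially the same route as the paper: reduce the claim to the congruence $\bar{Q}_1^T\bar{Q}_2 \equiv 0 \pmod{p^2}$, prove it entrywise via Lemma~\ref{lemma:uv} in the generic case and an orthogonality argument in the degenerate case where a column vanishes mod $p$, then clear $p^2$ and invoke square-freeness. Your handling of the degenerate case (factoring $u_i = p\,u_i'$ and pairing against the common spanning vector $w$) and your inline justification of the divisibility step are only cosmetic variants of the paper's use of the decomposition $v_j = c\,u_k + p\beta$ and of Lemma~\ref{lemma:level-mod}.
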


\begin{proof}
Let \(O = \bar{Q}_1^T \bar{Q}_2\), where each entry \(o_{ij} = u_i^T v_j\), with \(u_i\) and \(v_j\) the $i$th and $j$th columns of $\bar{Q}_1$ and $\bar{Q}_2$. Note that $Q_1^T Q_2$ is rational, while $\bar{Q}_1$, $\bar{Q}_2$, and $O$ are integral.

We claim $O \equiv 0 \pmod{p^2}$. Since $Q_1$ and $Q_2$ are orthogonal, $\bar{Q}_1^T \bar{Q}_1 \equiv 0 \pmod{p^2}$ and $\bar{Q}_2^T \bar{Q}_2 \equiv 0 \pmod{p^2}$, so $u_i^T u_i \equiv 0 \pmod{p^2}$ and $v_j^T v_j \equiv 0 \pmod{p^2}$. For any $i,j$, $u_i$ and $v_j$ are linearly dependent over $\mathbb{F}_p$.

Consider three cases:
\begin{itemize}
    \item If $u_i \not\equiv 0 \pmod{p}$ and $v_j \not\equiv 0 \pmod{p}$, Lemma~\ref{lemma:uv} gives $u_i^T v_j \equiv 0 \pmod{p^2}$.
    \item If $u_i \equiv 0 \pmod{p}$ and $v_j \equiv 0 \pmod{p}$, then $u_i^T v_j \equiv 0 \pmod{p^2}$ trivially.
    \item If exactly one of $u_i$, $v_j$ is $0 \pmod{p}$ (say $u_i \equiv 0 \pmod{p}$, $v_j \not\equiv 0 \pmod{p}$), then since $\operatorname{rank}_p(\bar{Q}_1) > 0$, there exists $u_k \not\equiv 0 \pmod{p}$, and $u_k$, $v_j$ are linearly dependent over $\mathbb{F}_p$. So $v_j = c u_k + p\beta$ for some $c \not\equiv 0 \pmod{p}$ and integer vector $\beta$. Thus,
    \[
        u_i^T v_j = c u_i^T u_k + p u_i^T \beta.
    \]
    Both terms are $0 \pmod{p^2}$, so $u_i^T v_j \equiv 0 \pmod{p^2}$.
\end{itemize}

Thus, $O = \ell(Q_1)\ell(Q_2) Q_1^T Q_2 \equiv 0 \pmod{p^2}$. Since $Q_1^T Q_2$ is rational, Lemma~\ref{lemma:level-mod} implies $\ell(Q_1^T Q_2) \mid \frac{\ell(Q_1)\ell(Q_2)}{p^2}$. As both levels are square-free, $p \nmid \frac{\ell(Q_1)\ell(Q_2)}{p^2}$, so $p \nmid \ell(Q_1^T Q_2)$.
\end{proof}

\begin{lemma}
\label{lemma:same-level}
Let $Q_1$ and $Q_2$ be primitive matrices with $\ell(Q_1) = \ell(Q_2) = \ell$, where $\ell$ is square-free. Let $\bar{Q}_1 = \ell Q_1$ and $\bar{Q}_2 = \ell Q_2$. If the column spaces of $\bar{Q}_1$ and $\bar{Q}_2$ coincide over $\mathbb{F}_p$ for every odd prime $p$ dividing $\ell$, then $Q_1^T Q_2$ is a permutation matrix.
\end{lemma}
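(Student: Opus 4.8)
The plan is to reduce everything to a statement about the level of the product $P := Q_1^T Q_2$: the excerpt already records that a level-one regular rational orthogonal matrix must be a permutation matrix, so it suffices to prove $\ell(P) = 1$. I would first check that $P$ is indeed a regular rational orthogonal matrix. It is rational as a product of rationals, and orthogonal since $P^T P = Q_2^T (Q_1 Q_1^T) Q_2 = Q_2^T Q_2 = I$. For regularity, orthogonality together with $Q_1 e = e$ gives $Q_1^T e = Q_1^T Q_1 e = e$, whence $P e = Q_1^T (Q_2 e) = Q_1^T e = e$.

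Next I would bound $\ell(P)$ arithmetically. Because $\bar{Q}_1 = \ell Q_1$ and $\bar{Q}_2 = \ell Q_2$ are integral, the product $\bar{Q}_1^T \bar{Q}_2 = \ell^2 P$ is integral, so by minimality of the level $\ell(P) \mid \ell^2$. Since $\ell$ is odd and square-free, every prime dividing $\ell^2$ is an odd prime dividing $\ell$; thus it is enough to show that no such prime divides $\ell(P)$.

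This last point is exactly what Lemma~\ref{lemma:p-out} provides. Fix an odd prime $p \mid \ell$: then $p$ divides both $\ell(Q_1) = \ell$ and $\ell(Q_2) = \ell$, both levels are square-free, and the hypothesis that the column spaces of $\bar{Q}_1$ and $\bar{Q}_2$ coincide over $\mathbb{F}_p$ supplies the shared-basis condition (these column spaces are one-dimensional, as $Q_1, Q_2$ are primitive with $\operatorname{rank}_p \bar{Q}_i = 1$). Hence Lemma~\ref{lemma:p-out} yields $p \nmid \ell(P)$. As this holds for every odd prime dividing $\ell$ and $\ell(P) \mid \ell^2$, we obtain $\ell(P) = 1$, so $P$ is a permutation matrix. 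I expect the only real care to lie in this verification step---confirming that $P$ stays regular and orthogonal and that every $p \mid \ell$ meets all three hypotheses of Lemma~\ref{lemma:p-out}---since once those are in place the square-free structure of $\ell$ forces the level down to $1$ with no further computation; the substantive arithmetic has already been absorbed into Lemma~\ref{lemma:p-out}.
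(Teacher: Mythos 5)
Your proof is correct and follows essentially the same route as the paper: bound $\ell(Q_1^T Q_2)$ by $\ell^2$ via integrality of $\bar{Q}_1^T\bar{Q}_2$, then use Lemma~\ref{lemma:p-out} to exclude every odd prime divisor of $\ell$, forcing the level to $1$. Your added verifications (orthogonality, regularity of $Q_1^T Q_2$, and the one-dimensionality of the column spaces needed for the shared-basis hypothesis) are details the paper leaves implicit, and in fact the regularity check is needed to rule out signed permutation matrices, so your write-up is slightly more careful than the original.
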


\begin{proof}
Since $\ell(Q_1) = \ell(Q_2) = \ell$, we have $\ell(Q_1^T Q_2) \mid \ell^2$. By Lemma~\ref{lemma:p-out}, for each odd prime $p$ dividing $\ell$, $p \nmid \ell(Q_1^T Q_2)$. It follows that $\ell(Q_1^T Q_2) = 1$. Therefore, $Q_1^T Q_2$ is an integral orthogonal matrix, i.e., a permutation matrix.
\end{proof}

With these tools in hand, we can now prove Theorem~\ref{theorem:main}.

\section{Proof of Theorem \ref{theorem:main}}
\label{section:proof}

We now establish several lemmas that underpin the proof of Theorem~\ref{theorem:main}.

\begin{lemma}
\label{lemma:rank}
Let \( G \in \mathcal{F}_n \) and \( Q \in \Gamma(G) \). Then $Q$ is a primitive matrix.
\end{lemma}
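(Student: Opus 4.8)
The plan is to show that an arbitrary $Q \in \Gamma(G)$ satisfies the two defining conditions of a primitive matrix: that $\ell(Q)$ is odd, and that for every prime $p \mid \ell(Q)$, $\operatorname{rank}_p(\ell(Q)Q) = 1$. Let me think about how each piece follows from the hypotheses on $G \in \mathcal{F}_n$.

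First, the oddness of $\ell(Q)$. Since $G \in \mathcal{F}_n$, the quantity $2^{-\lfloor n/2\rfloor}\det W(G)$ is odd, which means $2^{\lfloor n/2\rfloor} \dmid \det W(G)$. Lemma~\ref{lemma:2-out} then applies directly to give that $\ell(Q)$ is odd. (Indeed, Corollary~\ref{corollary:l-struct} already records that $\ell(Q)$ is odd and square-free, so I can simply cite it.) So the oddness condition is immediate.

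The substantive part is the rank condition. Fix an odd prime $p \mid \ell(Q)$. By Lemma~\ref{lemma:level-dn}, $\ell(Q)$ divides $d_n(W(G))$, so $p \mid d_n(W(G))$, which means $p \mid \det W(G)$. Because $G \in \mathcal{F}_n$, the hypothesis guarantees $\operatorname{rank}_p W(G) = n-1$ for every odd prime $p$ dividing $\det W(G)$; equivalently, exactly one invariant factor of $W(G)$ is divisible by $p$, namely $d_n(W(G))$, and $p \dmid d_n(W(G))$ since $2^{-\lfloor n/2\rfloor}\det W(G)$ is cube-free (so $p^2 \dmid \det W(G)$ forces $p \dmid d_n$). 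The goal is to translate $\operatorname{rank}_p W(G) = n-1$ into $\operatorname{rank}_p(\ell(Q)Q) = 1$. For this I would use the relation $Q = W(G)W(H)^{-1}$ from Theorem~\ref{theorem:q} (valid since $G$ is controllable), rewritten as $\ell(Q)Q \cdot W(H) = \ell(Q)W(G)$, i.e. $\bar Q\, W(H) = \ell(Q)W(G)$ where $\bar Q = \ell(Q)Q$ is integral. Reducing modulo $p$ and using $p \mid \ell(Q)$ gives $\bar Q\, W(H) \equiv 0 \pmod p$, so the columns of $W(H)$ lie in the right null space of $\bar Q$ over $\mathbb{F}_p$.

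The main obstacle, and the heart of the argument, is pinning down $\operatorname{rank}_p \bar Q$ exactly. The upper bound should come from $\bar Q^T \bar Q = \ell(Q)^2 I \equiv 0 \pmod{p^2}$ (since $p^2 \mid \ell(Q)^2$), which forces the column space of $\bar Q$ to be self-orthogonal over $\mathbb{F}_p$, together with the minimality in the definition of the level, which prevents $\bar Q \equiv 0 \pmod p$ and hence gives $\operatorname{rank}_p \bar Q \geq 1$. The lower bound---that the rank is not larger than one---is where the arithmetic hypotheses on $W(G)$ must be fully exploited: since $H$ is a generalized cospectral mate, $W(H)$ has the same Smith normal form as $W(G)$, so $\operatorname{rank}_p W(H) = n-1$, meaning $W(H)$ has a one-dimensional null space over $\mathbb{F}_p$; combined with $\bar Q W(H) \equiv 0$ and $\bar Q W(G) = \ell(Q) W(G)^{} \cdots$ one constrains the row and column spaces of $\bar Q$ modulo $p$. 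I expect the cleanest route is to show that both $\operatorname{rank}_p \bar Q \le n - \operatorname{rank}_p W(H) + (\text{correction})$ and the self-orthogonality force $\operatorname{rank}_p \bar Q = 1$, effectively recovering the content of Lemma~\ref{lemma:pk-out} in matrix-rank form. I would structure the proof to invoke Lemma~\ref{lemma:pk-out} to control $\ell(Q)$ prime-by-prime and then argue the rank-one conclusion from the self-orthogonal, nonzero column space over $\mathbb{F}_p$.
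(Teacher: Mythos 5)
The oddness claim is fine, and your overall strategy (use the walk-matrix relation from Theorem~\ref{theorem:q}, reduce modulo $p$, exploit $\operatorname{rank}_p W(G)=n-1$, and get $\operatorname{rank}_p\bar Q\ge 1$ from minimality of the level) is the right one; but the way you set up the key step contains a genuine gap. You work with the relation $\bar Q\,W(H)=\ell(Q)W(G)$, so that reducing modulo $p$ puts the \emph{columns of $W(H)$} inside $\ker_{\mathbb{F}_p}(\bar Q)$; to conclude $\operatorname{rank}_p\bar Q\le 1$ you then need $\operatorname{rank}_p W(H)=n-1$, and you justify this by asserting that a generalized cospectral mate $H$ has a walk matrix with the same Smith normal form as $W(G)$. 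That assertion is false, and the paper's own Example~2 refutes it: $N$ and $M$ there are generalized cospectral mates with $\det W(N)=\det W(M)$, yet $\operatorname{rank}_3 W(N)=8$ while $\operatorname{rank}_3 W(M)=7$, so their walk matrices have different Smith normal forms. Generalized cospectrality only forces $\lvert\det W(G)\rvert=\lvert\det W(H)\rvert$ (since $QW(H)=W(G)$ with $Q$ orthogonal), not equality of $p$-ranks. Your fallback argument cannot rescue this either: self-orthogonality of the column space of $\bar Q$ (from $\bar Q^T\bar Q\equiv 0\pmod{p^2}$) only bounds $\operatorname{rank}_p\bar Q$ by $n/2$, and Lemma~\ref{lemma:pk-out} constrains which primes divide $\ell(Q)$, not the rank of $\bar Q$.

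The repair is to transpose the relation, which is exactly what the paper does: from $Q^{-1}=Q^T$ one gets $Q^TW(G)=W(H)$, hence $\bar Q^TW(G)=\ell(Q)W(H)\equiv 0\pmod p$. Now it is the columns of $W(G)$ — whose span over $\mathbb{F}_p$ has dimension $n-1$ by the actual hypothesis on $G\in\mathcal{F}_n$ (note $p\mid\ell(Q)\mid d_n(W(G))$ implies $p\mid\det W(G)$) — that lie in $\ker_{\mathbb{F}_p}(\bar Q^T)$, so every column of $\bar Q$ lies in the one-dimensional left null space of $W(G)$ and $\operatorname{rank}_p\bar Q\le 1$. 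Combining with $\bar Q\not\equiv 0\pmod p$ (Lemma~\ref{lemma:level-mod}, as you correctly noted) gives $\operatorname{rank}_p\bar Q=1$. So the needed fix is small — use the relation in the direction that consumes the rank hypothesis on $W(G)$ rather than an unavailable hypothesis on $W(H)$ — but as written your argument rests on a false lemma.
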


\begin{proof}
Let $\bar{Q} = \ell(Q) Q$ and let $p$ be any prime dividing $\ell(Q)$. By Corollary~\ref{corollary:l-struct}, $\ell(Q)$ is odd and square-free. Since $\ell(Q) \mid d_n(W(G))$, $p$ also divides $d_n(W(G))$. By Theorem~\ref{theorem:q}, $Q^T W(G) = W(H)$ (which is integral) for some graph $H$ with the same generalized spectrum as $G$, so $\bar{Q}^T W(G) \equiv 0 \pmod{p}$. By the assumptions of Theorem~\ref{theorem:main}, $\operatorname{rank}_p W(G) = n-1$, so the nullspace of $W(G)$ over $\mathbb{F}_p$ is one-dimensional. Thus, $\operatorname{rank}_p(\bar{Q}) \le 1$.

On the other hand, since $p \mid \ell(Q)$, Lemma~\ref{lemma:level-mod} ensures that $\bar{Q} \not\equiv 0 \pmod{p}$, so $\operatorname{rank}_p(\bar{Q}) > 0$. Therefore, $\operatorname{rank}_p(\bar{Q}) = 1$ for every $p \mid \ell(Q)$, as required.
\end{proof}

The following corollary is immediate from Lemma~\ref{lemma:rank} and the fact that the nullspace of $W(G)$ over $\mathbb{F}_p$ is one-dimensional.

\begin{corollary}
\label{corollary:linear-basis}
Let $G \in \mathcal{F}_n$ and \( Q_1, Q_2 \in \Gamma(G) \). If a prime \( p \) divides both \( \ell(Q_1) \) and \( \ell(Q_2) \), then the columns of \( \ell(Q_1) Q_1 \) and \( \ell(Q_2) Q_2 \) span the same one-dimensional subspace over \( \mathbb{F}_p \).
\end{corollary}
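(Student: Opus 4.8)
The plan is to show that, for each $i \in \{1,2\}$, the columns of $\bar{Q}_i := \ell(Q_i) Q_i$ span one and the same canonical one-dimensional subspace of $\mathbb{F}_p^n$, namely the null space of $W(G)^T$ modulo $p$. Since this subspace depends only on $G$ and $p$ and not on the particular matrix $Q_i$, the two column spans must coincide, which is exactly the assertion of the corollary.

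First I would invoke Theorem~\ref{theorem:q}: since $Q_i \in \Gamma(G)$, there is a graph $H_i$ with $Q_i^T W(G) = W(H_i)$, and $W(H_i)$ is integral. Multiplying through by $\ell(Q_i)$ gives $\bar{Q}_i^T W(G) = \ell(Q_i) W(H_i)$. Because $p \mid \ell(Q_i)$ by hypothesis, the right-hand side is $\equiv 0 \pmod{p}$, so $\bar{Q}_i^T W(G) \equiv 0 \pmod{p}$. Reading this row by row (equivalently, transposing it to $W(G)^T \bar{Q}_i \equiv 0 \pmod{p}$), every column of $\bar{Q}_i$ lies in the null space of $W(G)^T$ over $\mathbb{F}_p$.

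Next I would use that $G \in \mathcal{F}_n$ forces $\operatorname{rank}_p W(G) = n-1$ for the prime $p$ at hand, since $p \mid \ell(Q_i)$, $\ell(Q_i) \mid d_n(W(G))$, and $d_n(W(G)) \mid \det W(G)$, so $p$ is an odd prime dividing $\det W(G)$. Hence $\operatorname{rank}_p W(G)^T = n-1$ as well, and the null space of $W(G)^T$ over $\mathbb{F}_p$ is exactly one-dimensional. Combined with the previous step, the column span of $\bar{Q}_i$ over $\mathbb{F}_p$ is contained in this fixed one-dimensional subspace. By Lemma~\ref{lemma:rank} each $Q_i$ is primitive with $\operatorname{rank}_p(\bar{Q}_i) = 1$, so the columns of $\bar{Q}_i$ actually span that subspace rather than only the zero vector.

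Putting this together, the span of the columns of $\bar{Q}_i$ over $\mathbb{F}_p$ equals the null space of $W(G)^T$ modulo $p$, an object determined solely by $G$ and $p$; it is therefore identical for $i=1$ and $i=2$, giving the claim. There is essentially no serious obstacle here beyond bookkeeping: the only point demanding care is tracking the transpose correctly, so that one identifies the relevant (left) null space of $W(G)$ and verifies that it is the common target one-dimensional subspace for both $\bar{Q}_1$ and $\bar{Q}_2$.
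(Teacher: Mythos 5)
Your proof is correct and takes essentially the same route as the paper, which declares the corollary immediate from Lemma~\ref{lemma:rank} together with the one-dimensionality of the null space of $W(G)$ over $\mathbb{F}_p$: you simply spell out that both column spans are pinned to that single canonical subspace (the left null space of $W(G)$), with the rank-one conclusion of Lemma~\ref{lemma:rank} upgrading containment to equality. The only detail to add is an explicit appeal to Corollary~\ref{corollary:l-struct} to justify that $p$ is odd, so that the rank condition in the definition of $\mathcal{F}_n$ indeed applies to $p$.
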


With these results, we are ready to prove Theorem~\ref{theorem:main}.

\begin{proof}[\textbf{Proof of Theorem \ref{theorem:main}}]
Let \(H\) and \(F\) be two generalized cospectral mates of \(G\). Let \(Q_1, Q_2 \in \Gamma(G)\) such that
\[
A(H)= Q_1^TA(G)Q_1
\quad\text{and}\quad
A(F) = Q_2^TA(G)Q_2.
\]

We can write $A(G)$ in terms of $A(H)$ as:
\[
A(G) = Q_1 A(H) Q_1^T,
\]

Putting this into the expression for $A(F)$, we have:
\[
A(F) = Q_2^T Q_1 A(H) Q_1^T Q_2.
\]

Using Lemma~\ref{lemma:rank} and Corollary~\ref{corollary:linear-basis}, if \( \ell(Q_1) = \ell(Q_2) \), then the conditions for Lemma~\ref{lemma:same-level} are satisfied, which implies that \(Q_1^T Q_2\) is a permutation matrix. This means that \(H\) and \(F\) are isomorphic graphs, contradicting the assumption that they are distinct generalized cospectral mates. Therefore, each non-isomorphic generalized cospectral mate of \(G\) corresponds to a distinct value of the level \(\ell(Q)\) for some \(Q \in \Gamma(G)\), and no two generalized cospectral mates share the same level. This establishes a one-to-one correspondence between possible levels and non-isomorphic generalized cospectral mates.

For every matrix $Q \in \Gamma(G)$, Lemma~\ref{lemma:level-dn} ensures that $\ell(Q)$ divides $d_n(W(G))$, and Corollary~\ref{corollary:l-struct} guarantees that $\ell(Q)$ is odd and square-free. Lemma~\ref{lemma:pk-out} further restricts the possible odd prime divisors of $\ell(Q)$ to those $p$ for which $p^2$ divides $d_n(W(G))$. Thus, the level $\ell(Q)$ can only be a product of distinct odd primes $p_1, \ldots, p_k$ such that $p_i^2 \mid d_n(W(G))$ for each $i$.

There are $2^k$ possible square-free products of these $k$ primes, including $1$. The case $\ell(Q) = 1$ corresponds to a permutation matrix, which yields an isomorphic graph and does not produce a new cospectral mate. Therefore, the number of non-isomorphic generalized cospectral mates of $G$ is at most $2^k - 1$. This completes the proof.
\end{proof}

\section{Examples}
\label{section:examples}

\begin{figure*}[t!]
    \centering
    \begin{subfigure}[t]{0.5\textwidth}
        \centering
        \includegraphics[scale=0.3]{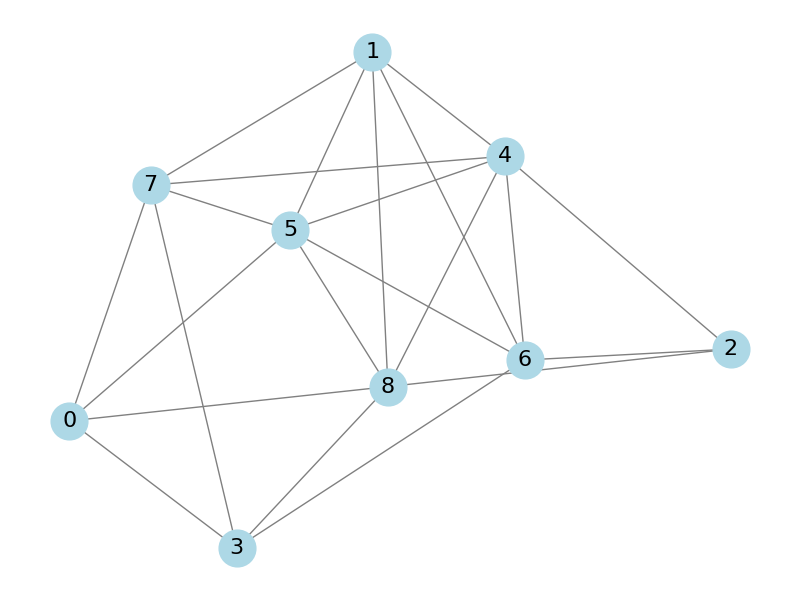}
        \caption{Graph $G$}
        \label{figure:e1-g}
    \end{subfigure}%
    ~ 
    \begin{subfigure}[t]{0.5\textwidth}
        \centering
        \includegraphics[scale=0.3]{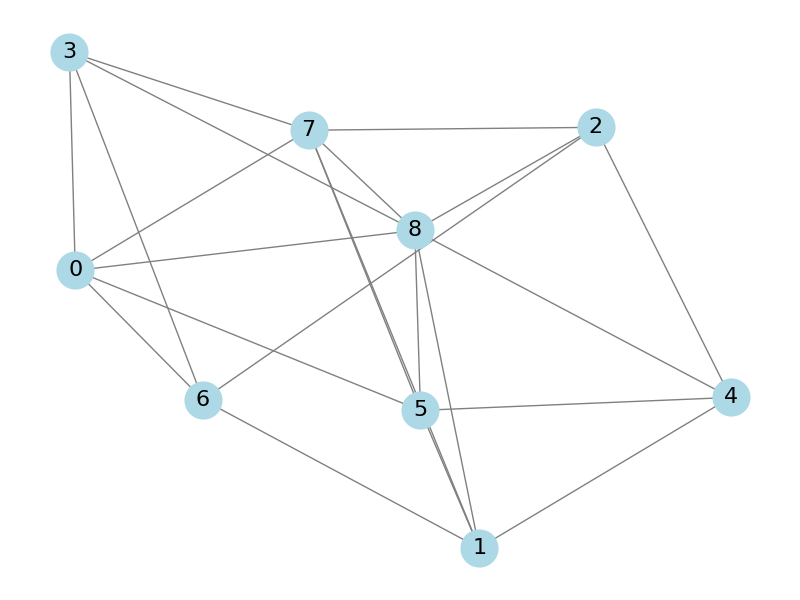}
        \caption{Graph $H$}
        \label{figure:e1-h}
    \end{subfigure}
    \caption{Example 1}
\end{figure*}

\begin{example}
Let $n=9$ and adjacency matrix of a graph $G$ (Figure~\ref{figure:e1-g}) be as follows:

\begin{equation*}
A(G) = \begin{pmatrix}
0 & 0 & 0 & 1 & 0 & 1 & 0 & 1 & 1 \\
0 & 0 & 0 & 0 & 1 & 1 & 1 & 1 & 1 \\
0 & 0 & 0 & 0 & 1 & 0 & 1 & 0 & 1 \\
1 & 0 & 0 & 0 & 0 & 0 & 1 & 1 & 1 \\
0 & 1 & 1 & 0 & 0 & 1 & 1 & 1 & 1 \\
1 & 1 & 0 & 0 & 1 & 0 & 1 & 1 & 1 \\
0 & 1 & 1 & 1 & 1 & 1 & 0 & 0 & 0 \\
1 & 1 & 0 & 1 & 1 & 1 & 0 & 0 & 0 \\
1 & 1 & 1 & 1 & 1 & 1 & 0 & 0 & 0
\end{pmatrix}
\end{equation*}

The determinant of the walk matrix of $G$ is
\[
\det{W(G)} = -1936 = (-1) \times 2^4 \times 11^2.
\]

The rank of $W(G)$ over $\mathbb{F}_{11}$ is $8$, which satisfies the conditions of Theorem~\ref{theorem:main}. Therefore, $G$ can have at most one non-isomorphic generalized cospectral mate. Indeed, there exists exactly one such mate, denoted by $H$ (see Figure~\ref{figure:e1-h}), whose adjacency matrix is given below.

\begin{equation*}
    A(H) =
    \begin{pmatrix}
0 & 0 & 0 & 1 & 0 & 1 & 1 & 1 & 1 \\
0 & 0 & 0 & 0 & 1 & 1 & 1 & 1 & 1 \\
0 & 0 & 0 & 0 & 1 & 0 & 1 & 1 & 1 \\
1 & 0 & 0 & 0 & 0 & 0 & 1 & 1 & 1 \\
0 & 1 & 1 & 0 & 0 & 1 & 0 & 0 & 1 \\
1 & 1 & 0 & 0 & 1 & 0 & 0 & 1 & 1 \\
1 & 1 & 1 & 1 & 0 & 0 & 0 & 0 & 0 \\
1 & 1 & 1 & 1 & 0 & 1 & 0 & 0 & 1 \\
1 & 1 & 1 & 1 & 1 & 1 & 0 & 1 & 0 \\
    \end{pmatrix}.
\end{equation*}.

\end{example}

\begin{figure*}[t!]
    \centering
    \begin{subfigure}[t]{0.5\textwidth}
        \centering
\includegraphics[scale=0.3]{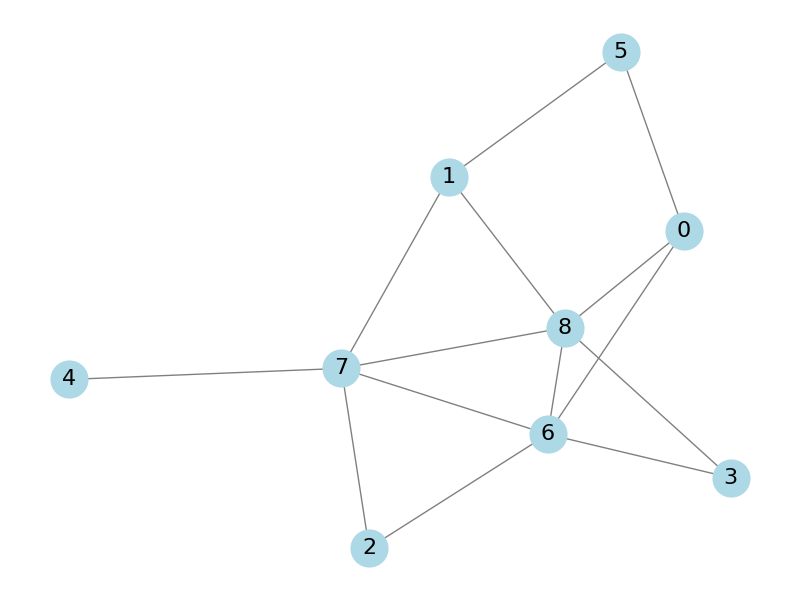}
\caption{Graph $N$}
\label{figure:e2-n}
    \end{subfigure}%
    ~ 
    \begin{subfigure}[t]{0.5\textwidth}
        \centering
\includegraphics[scale=0.3]{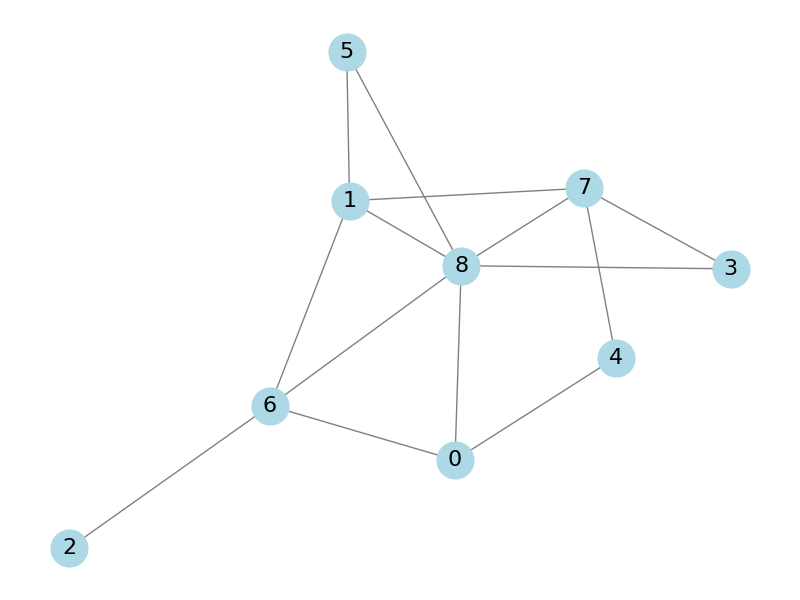}
\caption{Graph $M$}
\label{figure:e2-m}
    \end{subfigure}
    \caption{Example 2}
\end{figure*}

\begin{example}
Let $n=9$ and adjacency matrix of a graph $N$ (Figure~\ref{figure:e2-n}) be as follows:

\begin{equation*}
A(N) = \begin{pmatrix}
0 & 0 & 0 & 0 & 0 & 1 & 1 & 0 & 1 \\
0 & 0 & 0 & 0 & 0 & 1 & 0 & 1 & 1 \\
0 & 0 & 0 & 0 & 0 & 0 & 1 & 1 & 0 \\
0 & 0 & 0 & 0 & 0 & 0 & 1 & 0 & 1 \\
0 & 0 & 0 & 0 & 0 & 0 & 0 & 1 & 0 \\
1 & 1 & 0 & 0 & 0 & 0 & 0 & 0 & 0 \\
1 & 0 & 1 & 1 & 0 & 0 & 0 & 1 & 1 \\
0 & 1 & 1 & 0 & 1 & 0 & 1 & 0 & 1 \\
1 & 1 & 0 & 1 & 0 & 0 & 1 & 1 & 0 \\
\end{pmatrix}
\end{equation*}

The determinant of the walk matrix of $N$ is
\[
\det{W(N)} = 10224 = 2^4 \times 3^2 \times71.
\]

We compute that $\operatorname{rank}_{3}{W(N)} = 8$, which satisfies the conditions of Theorem~\ref{theorem:main}. Therefore, $N$ can have at most one non-isomorphic generalized cospectral mate. Indeed, there exists exactly one such mate, denoted by $M$ (see Figure~\ref{figure:e2-m}), whose adjacency matrix is given below.

\begin{equation*}
    A(M) =
    \begin{pmatrix}
0 & 0 & 0 & 1 & 0 & 1 & 1 & 1 & 1 \\
0 & 0 & 0 & 0 & 1 & 1 & 1 & 1 & 1 \\
0 & 0 & 0 & 0 & 1 & 0 & 1 & 1 & 1 \\
1 & 0 & 0 & 0 & 0 & 0 & 1 & 1 & 1 \\
0 & 1 & 1 & 0 & 0 & 1 & 0 & 0 & 1 \\
1 & 1 & 0 & 0 & 1 & 0 & 0 & 1 & 1 \\
1 & 1 & 1 & 1 & 0 & 0 & 0 & 0 & 0 \\
1 & 1 & 1 & 1 & 0 & 1 & 0 & 0 & 1 \\
1 & 1 & 1 & 1 & 1 & 1 & 0 & 1 & 0 \\
    \end{pmatrix}.
\end{equation*}

It is worth noting that, although $\det{W(N)} = \det{W(M)}$, we have $\operatorname{rank}_{3} W(M) = 7$, which does not satisfy the conditions of Theorem~\ref{theorem:main}. Therefore, we cannot use the theorem directly for the graph $M$ to get an upper bound.

\end{example}

\section{Conclusion}
\label{section:conclusion}
As discussed earlier, the walk matrix of a graph is precisely the controllability matrix of a linear dynamical system defined on the graph, where the adjacency matrix $A$ encodes the topology and the all-ones vector $e$ serves as the input vector:
\begin{equation}
    \dot{x}(t) = A x(t) + e u(t).
\end{equation}
Here, the controllability matrix
\[
\mathcal{C} = [e,\, Ae,\, A^2 e,\, \ldots,\, A^{n-1}e]
\]
is exactly the walk matrix $W(G)$. The system is controllable if and only if $W(G)$ is non-singular. This establishes a direct link between spectral graph theory and control theory: the walk matrix-based criteria for spectral characterization are fundamentally controllability conditions for the associated dynamical system.

More generally, one can consider
\begin{equation}
    \dot{x}(t) = A x(t) + b u(t),
\end{equation}
where $b$ is any input vector. The corresponding controllability matrix is $[b,\, Ab,\, \ldots,\, A^{n-1}b]$. While most spectral characterizations use $e$ as the input, recent work by Qiu et al.~\cite{qiu2023generalized} has shown that using different input vectors $b$ can be crucial, especially for regular graphs where $e$ may not guarantee controllability.

In this work, we established new upper bounds on the number of generalized cospectral mates of a graph, based on the arithmetic properties of the walk matrix determinant, and extended previous results to a broader family of graphs. Our results highlight the deep connection between spectral characterization and controllability: the arithmetic structure of the controllability (walk) matrix plays a crucial role in determining whether a graph is DGS.

Exploring alternative input vectors in the controllability matrix framework—beyond the all-ones vector—remains a promising direction for future research. This could lead to new families of graphs determined by their (generalized) spectra and further strengthen the interplay between spectral graph theory and control theory.

\bibliographystyle{plain}
\bibliography{root}

\end{document}